\definecolor{labelkey}{rgb}{0,0,1}
\def\tildeustart{\tilde u_{0}}
\def\tildeBstart{\tilde B_{0}}
\def\tildeuend{\tilde u_{1}}
\def\tildeBend{\tilde B_{1}}
\def\Btrunc{B_{\text T}}
\definecolor{mygray}{rgb}{.6, .6, .6}
\def\llabel#1{\notag}
\def\comma{\,,\qquad{}}
\def\indeq{\quad{}}
\newcommand{\R}{\mathbb{R}}
\newcommand{\tensor}{\otimes}
\newcommand{\cC}{\mathcal{C}}
\DeclareMathOperator{\supp}{\mathrm{supp}}
\DeclareMathOperator{\Div}{\mathrm{div}}
\DeclareMathOperator{\curl}{\mathrm{curl}}
\newtheorem{theorem}{Theorem}[section]
\newtheorem{proposition}[theorem]{Proposition}
\newtheorem{lemma}[theorem]{Lemma}
\newtheorem*{lemma*}{Lemma}
\theoremstyle{definition}
\newcommand{\ustart}{u_{0}}
\newcommand{\Bstart}{B_{0}}
\newcommand{\uend}{u_{1}}
\newcommand{\Bend}{B_{1}}
\newcommand{\onen}{_{1,n}}
\newcommand{\twon}{_{2,n}}
\newcommand{\onenminus}{_{1,n-1}}
\newcommand{\twonminus}{_{2,n-1}}
\newcommand{\onenplus}{_{1,n+1}}
\newcommand{\twonplus}{_{2,n+1}}
\newcommand{\ushear}{u_{\textnormal{shear}}}
\newcommand{\Bshear}{B_{\textnormal{shear}}}
\newcommand{\pshear}{p_{\textnormal{shear}}}
\numberwithin{equation}{section}
\newcounter{author}
\renewcommand*\author[1]{%
  \stepcounter{author}%
  \ifnum\c@author=1
    \gdef\@author{#1}%
  \else
    \xdef\@author{\unexpanded\expandafter{\@author\and#1}}%
  \fi
  \csgdef{author@\the\c@author}{#1}}
\newcommand*\email[1]{%
  \csgdef{email@\the\c@author}{#1}}
\newcommand*\address[1]{%
  \csgdef{address@\the\c@author}{#1}}
  \xdef\author@count{\the\c@author}%
\newcommand*\print@authors{%
  \ifnum\c@author>\author@count
  \else
    \print@author{\the\c@author}%
    \advance\c@author by 1
    \expandafter\print@authors
  \fi}
\newcommand*\print@author[1]{%
  \par\medskip
  \begin{tabular}{@{}l@{}}%
    % \textsc{Address of 
    % \csuse{author@#1}}\\
    \textsc{\csuse{address@#1}}\\
    \textit{E-mail address}:
    \href{mailto:\csuse{email@#1}}{\csuse{email@#1}}
  \end{tabular}}
\author{Igor Kukavica}
\address{\small Department of Mathematics, University of Southern California, Los Angeles, CA 90089}
\email{kukavica@usc.edu}
\author{Matthew Novack}
\address{\small Courant Institute of Mathematical Sciences, New York University, New York, NY 10012}
\email{mdn7@cims.nyu.edu}
\author{Vlad Vicol}
\address{\small Courant Institute of Mathematical Sciences, New York University, New York, NY 10012}
\email{vicol@cims.nyu.edu}
\title{{\bf Exact Boundary Controllability for the Ideal Magneto-hydrodynamic Equations}}
\date{}
\begin{document}

\maketitle
\begin{abstract}
We address the problem of controllability of the MHD system in a rectangular domain
with a control prescribed on the side boundary. We identify a necessary and sufficient
condition on the data to be null-controllable, i.e., can be driven to the zero state.
We also show that the validity of this condition allows the states to be stirred to each other.
If the condition is not satisfied, one can move from one state to another with the help of a
simple shear external magnetic force.
\end{abstract}
% \setcounter{tocdepth}{2}
% \tableofcontents

\section{Introduction}
We consider the two- or three-dimensional ideal magneto-hydrodynamic (MHD) equations for the unknown velocity $u\colon\Omega\times[0,T]\rightarrow\R^d$, magnetic field $B\colon\Omega\times[0,T]\rightarrow\R^d$, and pressure $p\colon\Omega\times[0,T]\rightarrow\R$:
\begin{subequations}
\label{eq:mhd}
\begin{align}
\partial_t u + u\cdot\nabla u - B\cdot\nabla B + \nabla p &= 0 \\
\partial_t B + u \cdot \nabla B - B\cdot\nabla u &= 0 \label{eq:B:equation} \\
\Div u = \Div B &= 0  \, .
\end{align}
\end{subequations}
In general, $\Omega\subset \R^d$ for $d=2$ or $3$ is an arbitrary set with Lipschitz boundary $\partial \Omega$ and outward pointing unit normal vector $n=n(x)$ for $x\in\partial\Omega$, in which case \eqref{eq:mhd} is classically supplemented with the boundary conditions
\begin{equation}\notag
    u \cdot n = B \cdot n = 0 
    \quad\textnormal{on}\quad \partial\Omega
    \, .
\end{equation}
For the existence theory of classical solutions to the ideal MHD system \eqref{eq:mhd}, see e.g.~\cite{Caflisch97}.

\subsection{Main result and comments}

To set the notation, let $\Gamma$ be an open and non-empty subset of $\partial\Omega$ which has non-empty intersection with every connected component of $\partial\Omega$. The question of exact boundary controllability of \eqref{eq:mhd} may be stated as follows. Given $T>0$, initial data $(\ustart,\Bstart)$, and terminal data $(\uend,\Bend)$ such that
\begin{subequations}\label{eq:data:conditions}
\begin{align}
    \Div \ustart = \Div \uend = \Div \Bstart = \Div \Bend &= 0 \quad\textnormal{in}\quad \Omega\\ 
    \ustart \cdot n = \uend\cdot n = \Bstart \cdot n = \Bend \cdot n &= 0 \quad\textnormal{on}\quad \partial\Omega\setminus\Gamma \, ,
\end{align}
\end{subequations}
does there exist a solution $(u,B)$ of the MHD equations \eqref{eq:mhd} such that 
\begin{subequations}\label{eq:solution:conditions}
\begin{align}
    (u,B)|_{t=0} &= (\ustart,\Bstart) \\ 
    (u,B)|_{t=T} &= (\uend,\Bend)\\
    u\cdot n = B &\cdot n = 0, \qquad  t\in[0,T],\quad  x\in\partial\Omega\setminus\Gamma \, ?
\end{align}
\end{subequations}
{In full generality, the answer to this question is no, as we demonstrate here. This contrasts sharply with the case of the incompressible Euler equations, in which the boundary control problem was first addressed in the two-dimensional case by Coron \cite{coron2,coron1} and then by Glass in the fully general three-dimensional setting~\cite{Glass00}. In this paper, we prove the exact boundary controllability for the MHD equations posed in a simple type of domain {given that certain extra conditions are satisfied}.  After the statement of the main theorem, we provide some simple calculations indicating that in many scenarios, these conditions are necessary and in fact sharp.}

Throughout the paper, we set $d=2$ and $\Omega=[0,1]^2$. The controlled portion $\Gamma$ of the boundary is the set $\{x=0,1\}\times(0,1)$, and we impose impermeability boundary conditions for $u=(u^1,u^2)$ and $B=(B^1,B^2)$ on $(0,1)\times\{y=0,1\}$. This is the setting of MHD in a planar duct, as considered in a recent preprint by Rissel and Wang \cite{risselwang}. We prove the following theorem.

\begin{theorem}\label{th:main}
Suppose $(\ustart,\Bstart)$ and $(\uend,\Bend)$ are $H^r$ regular
divergence-free vector fields,
where $r\geq 3$ is an integer,
with vanishing normal components on 
$(0,1)\times\{0,1\}$,
%$\{0<x<1\}\times\{y=0,1\}$, 
and assume that $B_0$ and $B_1$ satisfy
\begin{equation}\label{eq:mean}
    \int_{[0,1]^2} \Bstart^1 \,dx\,dy = \int_{[0,1]^2} \Bend^1 \,dx\,dy = 0 \, .
\end{equation}
Then there exists a 
solution $(u,B)\in C([0,T]; H^r([0,1]^2))$ 
to \eqref{eq:mhd} 
such that \eqref{eq:data:conditions} and \eqref{eq:solution:conditions} hold. If \eqref{eq:mean} is \emph{not} satisfied, then the same theorem holds but with a forcing term ${h}(t)e_x$ in the equation \eqref{eq:B:equation} for the magnetic field $B$; that is, \eqref{eq:B:equation} must be modified as
\begin{equation}\notag
\partial_t B + u \cdot \nabla B - B \cdot \nabla u = h(t)e_x \, .
\end{equation}
\end{theorem}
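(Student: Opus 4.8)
The plan is to exploit the special two-dimensional structure of ideal MHD, in which the divergence-free magnetic field admits the flux-function representation $B=\nabla^{\perp}\psi=(-\partial_y\psi,\partial_x\psi)$ and the induction equation \eqref{eq:B:equation} collapses to the pure transport equation $\partial_t\psi+u\cdot\nabla\psi=0$. Since $u$ is tangent to the horizontal walls, $\psi$ is constant on each of $(0,1)\times\{0\}$ and $(0,1)\times\{1\}$, and an integration by parts gives $\int_{[0,1]^2}B^1\,dx\,dy=-(\psi|_{y=1}-\psi|_{y=0})$; moreover this quantity is conserved by the controlled dynamics, since the boundary terms it generates on $\Gamma$ cancel. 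Thus condition \eqref{eq:mean} is exactly the statement that $\psi$ can be normalized to vanish on \emph{both} horizontal walls, so that the magnetic field may be driven to $0$. When \eqref{eq:mean} fails, the conserved jump $\psi|_{y=1}-\psi|_{y=0}$ obstructs reaching $B\equiv 0$, and I would restore controllability with the shear force $h(t)e_x$, which in flux variables adds $-h(t)\,y$ to the right-hand side of the transport equation and is thus precisely the mechanism that moves the two wall constants at different rates.

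First I would reduce the two-endpoint problem to null controllability. Ideal MHD is reversible under $(u,B)(x,t)\mapsto(-u,B)(x,T-t)$, so it suffices to drive an arbitrary admissible state with $\int B^1=0$ to the rest state $(0,0)$ on a half time-interval: one then concatenates a trajectory $(\ustart,\Bstart)\to(0,0)$ on $[0,T/2]$ with the time reversal of a trajectory $(\uend,\Bend)\to(0,0)$ on $[T/2,T]$, the two halves matching to the required regularity at the gluing time because both reach the smooth state $(0,0)$.

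For the null controllability I would run the return method in the duct geometry. As reference I take the exact solution $(\bar u,\bar B)=((a(t),0),0)$ with pressure $\bar p=-a'(t)x$, where $a\ge 0$ satisfies $\int_0^T a\,dt>1$; its flow map translates every fluid particle horizontally out through the outflow part of $\Gamma$ before time $T$, so that at the final time every particle in $\Omega$ has entered through the inflow part of $\Gamma$. Prescribing the incoming value of $\psi$ there to vanish then forces $\psi(\cdot,T)\equiv 0$, i.e.\ $B(\cdot,T)\equiv 0$, purely by transport. It remains to steer the velocity to $0$ as well; as for the Euler equations treated by Coron and Glass, I would superimpose a correcting control on $\bar u$ and solve the momentum equation — which, once $B$ has been flushed, is an Euler equation forced by the small Lorentz term $-B\cdot\nabla B$ — by a perturbative argument around the flushing reference that absorbs the data $\ustart$.

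The main obstacle is the coupling of the two equations together with the regularity requirement $(u,B)\in C([0,T];H^r)$. The Lorentz force feeds the transported history of $\Bstart$ back into the velocity equation, so the two fields cannot be handled separately; I would close the construction by a fixed-point iteration in $C([0,T];H^r([0,1]^2))$ that alternately solves the transport equation for $\psi$ along the flow of the current velocity and the controlled momentum equation for $u$ with that $\psi$ frozen, using $H^r$ transport and elliptic estimates to obtain a contraction on a fixed time scale and then patching in time. A second, more delicate point is preserving $H^r$ regularity across the flushing: the incoming fluid is required to carry $\psi=0$ while the original fluid adjacent to the inflow wall carries $\psi_0\neq 0$, which would create a jump at $x=0$ for small $t$. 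I would avoid this by prescribing the inflow value of $\psi$ (and, in the forcing case, the profile of $h$) to ramp smoothly from the trace $\psi_0|_{x=0}$ at $t=0$ down to $0$ over the initial flushing phase — the fluid carrying the ramp is flushed out before time $T$, so it does not affect $\psi(\cdot,T)$ — and the bulk of the technical work is checking that this ramp is consistent with the divergence-free and impermeability constraints on $\Gamma$ and yields controls keeping $(u,B)$ in $C([0,T];H^r)$ throughout.
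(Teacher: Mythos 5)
Your high-level skeleton does match the paper's strategy: reduction to null controllability via reversibility, identification of \eqref{eq:mean} with the conserved jump of the flux function $\psi$ across the two walls, a horizontal flushing shear as the expelling mechanism, the interpretation of $h(t)e_x$ as the term $-h(t)y$ in the flux transport that moves the two wall constants at different rates, and a final appeal to Euler controllability. However, there are two genuine gaps where the steps you propose would fail as stated. The first is the fixed-point scheme. Alternately freezing $u$ to transport $\psi$ and freezing $\psi$ to solve the momentum equation loses a derivative at every step: to propagate $\psi\in H^{r+1}$ (i.e.\ $B\in H^r$) along a frozen velocity, the commutator $[\nabla^{r+1},u\cdot\nabla]\psi$ requires $u\in H^{r+1}$, while the momentum equation forced by the frozen Lorentz term $-B\cdot\nabla B$ returns only $u\in H^r$. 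So your iteration map is not even bounded on $C([0,T];H^r([0,1]^2))$, let alone a contraction, no matter how small the time interval. This is precisely the derivative loss of ideal MHD that the paper flags explicitly; its cure is to symmetrize in Els\"asser variables $z_{1,2}=\tilde u\pm\tilde B$ and take the curl, so that each linear step is a transport equation for $\omega_{i,n}$ by the \emph{previous} iterate with a right-hand side $-\partial_k z_{1,n-1}^\ell\epsilon_{\ell j}\partial_j z_{2,n-1}^k$ containing only products of first derivatives. The cancellation of the second-order terms happens only inside this symmetrized formulation (Lemmas~\ref{lem:construct}--\ref{lem:two}); your alternating scheme destroys it.

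The second gap is that your return method poses a genuine inflow initial-boundary-value problem: particles enter through $\Gamma$ carrying prescribed values of $\psi$, so you must solve ideal MHD with fluid crossing the boundary, including corner compatibility at $\Gamma\cap\{y=0,1\}$ and the determination of the pressure with inflow-outflow conditions. There is no off-the-shelf well-posedness theory for this, and the ramping device you sketch does not supply one. The paper deliberately avoids this configuration: it extends the data $x$-periodically to the channel $\mathbb{T}\times[0,1]$ (Lemma~\ref{lem:periodic:extension}), truncates $B$ to compact support — possible exactly when $\int_{[0,1]^2}B^1=0$, via the same stream-function normalization you identified (Lemma~\ref{lem:compact:truncation}) — and lets the background shear transport the support of $B$ past $[0,1]^2$, so that no inflow problem is ever solved. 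Relatedly, after curling the Els\"asser system one must verify that the two reconstructed pressures coincide (Lemma~\ref{lem:equivalence}), using the mean-zero conditions and $\nabla\ushear=\nabla\Bshear=0$; if they do not, an artificial bulk force appears in the equation for $B$, defeating the purpose of the theorem. The paper points out that ensuring this pressure agreement "does not seem to mesh easily" with $u$ and $B$ penetrating the boundary — which is exactly the situation your flushing construction creates, and your proposal has no mechanism addressing it.
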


From here on, we fix $r\in\{3,4,5,\ldots\}$ and note that non-integer
values $r>2$ can be covered using the same method.

The implementation of the condition \eqref{eq:mean} is new  and provides the first instance of a solution to the MHD control problem \emph{without} a bulk forcing term in the equation for the magnetic field, and it characterizes some scenarios where a bulk forcing term is necessary.
We note that Rissel and Wang \cite{risselwang} used a forcing term in the equation for $B$ which is a harmonic function but which is not as simple or as explicit as the forcing term $h(t)e_x$. 
%In addition, they were not able to identify any scenarios in which the bulk forcing term is unnecessary.
Also, in \cite{risselwang} the forcing term is present regardless
of the validity of the condition \eqref{eq:mean}.

The necessity of \eqref{eq:mean} may be seen from the following reasoning.
Suppose that the pair $(u,B)$ solves \eqref{eq:mhd} on the set $\Omega=[0,1]^2$, and $u^2|_{y=0,1}=B^2|_{y=0,1}=0$. Then letting $n^k$ denote the 
%$k^{\textnormal{th}}$ 
$k$-th
component of the outward pointing normal vector $n$ on the boundary $\partial([0,1]^2)$, we may write 
\begin{align}
    \frac{d}{dt} \int_{[0,1]^2} B^1 &= \int_{[0,1]^2} \partial_k( B^k u^1 - u^k B^1 )  = \int_{\partial[0,1]^2} (B^k u^1-u^k B^1)n^k \, . \label{eq:constant:mean}
\end{align}
When $k=2$, i.e., on the top and bottom portions of the boundary, the integrand vanishes due to the assumptions on $u^2$ and $B^2$ at $y=0,1$.  However, the integrand {also} vanishes when $k=1$ since $B^1u^1-u^1B^1\equiv 0$. Thus we deduce that the \emph{mean of $B^1$ over the square is constant in time}. 

A substantial difficulty arises in the construction of the solution to an MHD-type system in~\eqref{eq:MHD:shears}. Construction requires changing to Els\"asser variables $(u+B, u-B)$ and taking the curl of the new equations. In order to show that one can ``undo" the curl and go back to the original $(u,B)$ variables, one must show that the two Els\"asser pressures agree, or equivalently that the solution to a certain elliptic equation vanishes, cf.~Lemma~\ref{lem:equivalence} below. If the solution of the elliptic equation does not vanish, then the two Els\"asser pressures do not agree, and returning to the original variables leads to an artificial forcing term in the equation for the magnetic field. Ensuring that the solution to the elliptic equation vanishes does not seem to mesh easily with the fact that $u$ and $B$ may penetrate the boundary. Rissel and Wang \cite{risselwang} comment further on this important issue in the introduction of their paper.

These aspects of the control problem are unique to the MHD equations; indeed, consider what happens to the mean of $u^1$ in the control problem for the Euler equations.  If $u_0=(\mathcal{U},0)$ is constant, then one may construct an exact solution to the Euler equations by 
\begin{equation}\notag
    u(t,x)=g(t)e_x, \qquad p(t,x)=xg'(t) \, ,
\end{equation}
where $g(t)$ is any function satisfying $g(0)=\mathcal{U}$. So in order to drive a constant horizontal shear to zero, one may use the pressure as a forcing term to extinguish the shear.  Of course such a construction is impossible in the equation for the magnetic field in MHD, leading to the condition \eqref{eq:mean}, as well as the modified statement of the theorem with a bulk forcing term $h(t)e_x$ in case \eqref{eq:mean} is not satisfied.

Since the mean of $B$ must be constant in time, it seems plausible that \eqref{eq:mean} may be replaced with the slightly weaker condition
\begin{equation}\label{eq:mean:alt}
    \int_{[0,1]^2} \Bstart^1 \,dx\,dy = \int_{[0,1]^2} \Bend^1 \,dx\,dy \, .
\end{equation}
Such a strengthening would be optimal, and we pose it as an open question whether Theorem~\ref{th:main} can be obtained in this way; cf.~Section~\ref{ss:two} for further comments.
%\footnote{The only point in our proof in which the slightly weaker condition \eqref{eq:mean:alt} does not suffice is \eqref{item:three} in Section~\ref{ss:two}. We provide further commentary there.}

% \vspace{.1cm}
% \begin{changemargin}{1cm}{1cm}
% \begin{center}\begin{spacing}{1.1}
% \textbf{Open Problem 1:} \textit{Is it possible to demonstrate the full boundary controllability for MHD in a planar duct in the case that the means of $B_0^1$ and $B_1^1$ are equal?}
% \end{spacing}\end{center}
% \end{changemargin}
Extensions of our results to other domains or to higher dimensions would also be interesting. 
% \vspace{.1cm}
% \begin{changemargin}{1cm}{1cm}
% \begin{center}\begin{spacing}{1.1}
% \textbf{Open Problem 2:} \textit{Given $\Omega$, which portions of the boundary must be controlled in order to prove the full boundary controllability of MHD? Can controllability be demonstrated with $\Omega$ in $\mathbb{R}^3$, or with different portions of $\partial\Omega$ being controlled?}
% \end{spacing}\end{center}
% \end{changemargin}

\subsection{Simplifications and setup}\label{ss:setup}
We claim that we can reduce the problem to
\begin{subequations}\label{eq:further:simp}
\begin{align}
     T&=1 \label{eq:further:simp:1}\\
     \|\ustart\|_{{H^r}} + \|\Bstart\|_{{H^r}} &\ll 1 \label{eq:further:simp:2} \\ 
    (\uend,\Bend)&=(0,0) \label{eq:further:simp:3}\\
    h(t) &= H'(t) \label{eq:further:simp:4} \, ,
\end{align}
\end{subequations}
where $H(t)\colon[0,1]\rightarrow \mathbb{R}$ is any smooth function satisfying
\begin{equation}\notag
    H(0) = \int_{[0,1]^2} B^1_0 \, , \qquad H(1) = 0\, , \qquad H \ll 1 \, .
\end{equation}
To see that these simplifications still imply Theorem~\ref{th:main} in full generality, first note that the MHD equations are invariant under the rescaling 
\begin{align}
     u(t,x) \rightarrow \lambda u(\lambda t, x), \qquad B(t,x)\rightarrow \lambda B(\lambda t, x), \qquad p \rightarrow \lambda^2 p(\lambda t, x)
     \, .
   \label{EQ04}
\end{align}
In the case that \eqref{eq:mean} is satisfied, we choose $H\equiv 0$.  
Then for $\lambda=\sfrac{T}{2}$, we rescale $(\ustart,\Bstart)\rightarrow \lambda(\ustart,\Bstart)$ and $(\uend,\Bend)\rightarrow \lambda(\uend,\Bend)$, and send both to $(0,0)$ in time $1$ using solutions $(\tildeustart, \tildeBstart)$ and $(\tildeuend, \tildeBend)$ to MHD, respectively. Then we reverse the direction of time
and change the signs of $(\tildeuend, \tildeBend)$, due to the scaling \eqref{EQ04},
and glue it together with $(\tildeustart, \tildeBstart)$ to produce
  \begin{equation}\notag
    (\tilde u, \tilde B)\colon[0,2]\times[0,1]^2 \rightarrow \R^3\times\R^3, \qquad (\tilde u ,\tilde B)|_{t=0}=\lambda(\ustart,\Bstart), \qquad (\tilde u ,\tilde B)|_{t=2}=\lambda(\uend,\Bend) \, .
  \end{equation}
%Then for $\lambda=\sfrac{T}{3}$, we rescale $(\ustart,\Bstart)\rightarrow \lambda(\ustart,\Bstart)$ and $(\uend,\Bend)\rightarrow \lambda(\uend,\Bend)$, and send both to $(0,0)$ in time $1$ using solutions $(\tildeustart, \tildeBstart)$ and $(\tildeuend, \tildeBend)$ to MHD, respectively. Then we reverse the direction of time
%and change the signs of $(\tildeuend, \tildeBend)$, due to the scaling \eqref{EQ04} for $\lambda=-1$,
%and glue it together with $(\tildeustart, \tildeBstart)$ to produce
%  \begin{equation}\notag
%    (\tilde u, \tilde B)\colon[0,3]\times[0,1]^2 \rightarrow \R^3\times\R^3, \qquad (\tilde u ,\tilde B)|_{t=0}=\lambda(\ustart,\Bstart), \qquad (\tilde u ,\tilde B)|_{t=3}=\lambda(\uend,\Bend) \, .
%  \end{equation}
Then defining 
\begin{equation}\notag
(u,B)(t,x)=\lambda^{-1}(\tilde u, \tilde B)(\lambda^{-1}t,x) \, ,
\end{equation}
we obtain a solution $(u,B)\colon\Omega\times[0,T]$ to \eqref{eq:mhd} satisfying \eqref{eq:data:conditions} and \eqref{eq:solution:conditions}.

In the case that \eqref{eq:mean} is \emph{not} satisfied, we may set $H$ to be a suitable non-constant function.  
Note that from \eqref{eq:constant:mean} and assuming that \eqref{eq:mean} is not satisfied, it is not possible for $H$ to be a constant function unless
\begin{equation}\notag
    \int_{[0,1]^2} B_0^1 \,dx\,dy = \int_{[0,1]^2} B_1^1 \,dx\,dy \, .
\end{equation}
In any case, proceeding as before, we obtain that $(u,B)$ solves the control problem, but with a forcing term $h(t)e_x = H'(t)e_x$ in the equation for the magnetic field. Therefore, we work under the assumptions 
\eqref{eq:further:simp:1}--\eqref{eq:further:simp:4} 
%\eqref{eq:further:simp} 
from here on.

\subsection{Outline}
The argument is structured as follows. The three steps
are addressed in Section~\ref{ss:first}, \ref{ss:two}, and~Sections~\ref{ss:three}, respectively.
\begin{enumerate}
    \item Show that the domain and the initial data $(u_0,B_0)$ may be extended to yield functions which are periodic in $x$ on a larger domain and still satisfy the appropriate divergence-free and  impermeability conditions.  This is achieved in Lemma~\ref{lem:periodic:extension}.
    Then we show that we can drive the system to a state in which the mean of $B^1(t)$ vanishes at some time $t$. This is achieved in Proposition~\ref{lem:local:one}.
    \item Show that for divergence-free vector fields on the square $[0,1]^2$ for which the mean of the first component vanishes (such as $B(t)$ after the application of the previous step), there is a divergence-free extension which vanishes on a large portion of $[-1,5]\times[0,1]$.
This is achieved in Lemma~\ref{lem:compact:truncation}.
    Then we show that compactly supported magnetic fields $B$ may be expelled from the domain $[0,1]^2$ using a strong, background, horizontal shear in $u$.  We carry out this step on the periodic domain $\mathbb{T}\times[0,1]$, where $\mathbb{T}=[0,6]$ extended periodically. This is achieved in Section~\ref{ss:two}. 
    \item Now that the magnetic field vanishes on $[0,1]^2$, the MHD on this domain reduces to the Euler equations,
%, and we may appeal now to the Euler control results%, \red{but we might write out our own proof using energy methods ... and get time regularity! :) } allowing us
and we may appeal to known control results for the Euler equations.
This is achieved in Section~\ref{ss:three}.
\end{enumerate}

%\subsection{Possible Extensions}
%\begin{comment}
%Consider a circular annulus in two dimensions.  The function $f(r)=\log(r)$ is harmonic in the annulus.  Therefore we can make a potential flow out of it, and a perfect time dependent flushing solution to Euler with a pressure ... we can conformally map this to other domains ... can we get a control proof out of this?  What happens to the mean of $B$ here?
%
%We can probably do a duct in three dimensions where you only control two opposite sides.  The mean of $B_1$ will again be constant ... the rest of the proof is identical - we're not using anything about two dimensions I don't think ... 
%\end{comment}
%\red{What would we like to say here?}
%\smallskip

\textbf{Acknowledgements:} 
IK was supported in part by the NSF grant DMS-1907992.
MN was supported in part by the NSF under grant DMS-1928930 while participating in a program hosted by the Mathematical Sciences Research Institute in Berkeley, California, during the spring 2021 semester.
VV was supported in part by the NSF CAREER Grant DMS-1911413.

\section{Proof of Theorem~\ref{th:main}}
\subsection{First step: extensions and local existence near background shears}\label{ss:first}

The first step consists of a lemma on extension of divergence-free vector fields and a local existence-type theorem for an MHD-type equation in the presence of a background shear.
In the remainder of this section, we denote
  \begin{equation*}
   \mathbb{T}=[0,6]   
  \end{equation*}
extended periodically.

\begin{lemma}[Extending to periodic data]\label{lem:periodic:extension}
Given an $H^r$ divergence-free vector field ${u}\colon[0,1]^2\rightarrow\mathbb{R}^2$ with $u^2|_{y=0,1}=0$, there exists ${u}_E=(u_E^1,u_E^2)\colon\mathbb{T}\times[0,1]\rightarrow\mathbb{R}^2$ 
in $H^{r}(\mathbb{T}\times[0,1])$
such that 
$$u_E|_{0\leq x \leq 1}=u\, , \qquad u_E^2|_{y=0,1}=0 \, , \qquad \int_{\mathbb{T}\times[0,1]} u_E^2 = 0 \, ; $$ in particular,
${u}_E$ is periodic in $x$ with period $6$, and ${u}_E$ satisfies the inequality
$$  \| {u}_E \|_{H^r(\mathbb{T}\times[0,1])} \lesssim \| u \|_{H^r([0,1]^2)} \, .  $$
\end{lemma}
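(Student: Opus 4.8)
The plan is to reduce the divergence-free extension problem to a scalar extension problem by working with a stream function. Since $u$ is divergence-free on the simply connected square $[0,1]^2$, I would write $u=\nabla^\perp\psi=(\partial_y\psi,-\partial_x\psi)$ with $\psi(x,y)=\int_0^y u^1(x,y')\,dy'$, so that $\psi|_{y=0}=0$, while $\partial_x\psi=-u^2$ and $\partial_y\psi=u^1$; together with the Poincaré inequality (using $\psi|_{y=0}=0$) this gives $\psi\in H^{r+1}([0,1]^2)$ with $\|\psi\|_{H^{r+1}}\lesssim\|u\|_{H^r}$. The boundary condition $u^2|_{y=0,1}=0$ is equivalent to $\partial_x\psi=0$ on $y=0,1$, i.e.\ $\psi$ is constant along the top and bottom edges; the bottom value is $0$ and the top value equals the flux $c=\int_0^1 u^1(x,y)\,dy$, which is independent of $x$ precisely because $\partial_x\int_0^1 u^1\,dy=-\int_0^1\partial_y u^2\,dy=-(u^2|_{y=1}-u^2|_{y=0})=0$. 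Thus the task becomes: extend $\psi$ to a function $\psi_E$ on $\mathbb{T}\times[0,1]$, periodic in $x$, with $\psi_E|_{y=0}=0$, $\psi_E|_{y=1}=c$, and $\|\psi_E\|_{H^{r+1}}\lesssim\|\psi\|_{H^{r+1}}$; then $u_E:=\nabla^\perp\psi_E$ is automatically divergence-free, periodic in $x$, satisfies $u_E^2|_{y=0,1}=0$, and has $\int u_E^2=-\int\partial_x\psi_E=0$ by periodicity.

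To construct $\psi_E$, I would first homogenize the vertical boundary data by subtracting the linear profile $\ell(y)=cy$, which satisfies $\ell(0)=0$, $\ell(1)=c$ and is independent of $x$ (hence periodic); its contribution $\nabla^\perp\ell=(c,0)$ is a harmless constant shear. Setting $\tilde\psi=\psi-\ell$, I am left to extend $\tilde\psi$, which vanishes on $y=0,1$, to a periodic-in-$x$ function vanishing on $y=0,1$, and then put $\psi_E=\tilde\psi_E+\ell$. Because the domain is the product $[0,1]_x\times[0,1]_y$ and the only boundary conditions are on $y=0,1$, I would extend $\tilde\psi$ purely in the $x$-variable using a standard (e.g.\ Seeley/higher-order reflection) extension operator $\mathcal E\colon H^{r+1}([0,1]^2)\to H^{r+1}(\mathbb{R}\times[0,1])$, followed by an $x$-cutoff $\chi(x)$ equal to $1$ on $[0,1]$ and supported in an interval of length strictly less than $6$. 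Since $\mathcal E$ acts only in $x$ and is built from reflections, it maps the zero function (the trace of $\tilde\psi$ on $y=0,1$) to zero, so $\chi\,\mathcal E\tilde\psi$ still vanishes on $y=0,1$; it agrees with $\tilde\psi$ on $[0,1]^2$ and obeys $\|\chi\,\mathcal E\tilde\psi\|_{H^{r+1}(\mathbb{R}\times[0,1])}\lesssim\|\tilde\psi\|_{H^{r+1}}$.

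Finally, I would periodize: set $\tilde\psi_E(x,y)=\sum_{k\in\mathbb{Z}}(\chi\,\mathcal E\tilde\psi)(x+6k,y)$. Because the $x$-support has length $<6$, the translates have pairwise disjoint supports on the fundamental domain $[0,6]$, so the sum is locally finite, $\tilde\psi_E$ is genuinely $6$-periodic and lies in $H^{r+1}(\mathbb{T}\times[0,1])$ with comparable norm, it still vanishes on $y=0,1$, and it coincides with $\tilde\psi$ on $[0,1]^2$ (only the $k=0$ term is supported there). Setting $\psi_E=\tilde\psi_E+\ell$ and $u_E=\nabla^\perp\psi_E$ then yields all the asserted properties, and chaining the bounds $\|u_E\|_{H^r}\le\|\psi_E\|_{H^{r+1}}\lesssim\|\tilde\psi\|_{H^{r+1}}+|c|\lesssim\|\psi\|_{H^{r+1}}+|c|\lesssim\|u\|_{H^r}$ (using $|c|\lesssim\|u^1\|_{L^1}\lesssim\|u\|_{H^r}$) closes the estimate.

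I expect the only real subtlety to be the simultaneous preservation of periodicity in $x$ and of the homogeneous conditions at $y=0,1$ while keeping the norm under control. The stream-function reformulation is what disentangles these: the divergence-free constraint and the impermeability condition $u^2|_{y=0,1}=0$ are both encoded in scalar boundary data for $\psi$, the $x$-independence of the flux $c$ (a direct consequence of $u^2|_{y=0,1}=0$) is exactly what allows the linear profile $\ell(y)=cy$ to absorb the nonzero top value, and the fact that the relevant boundary conditions live only on $y=0,1$ is what makes an $x$-only extension legitimate. The periodization is then unobstructed because the compact $x$-support can be arranged to fit inside one period.
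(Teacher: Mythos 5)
Your proposal is correct and follows essentially the same route as the paper: both pass to the stream function $\psi$ with $\nabla^\perp\psi=u$, use the impermeability condition to get $\psi|_{y=0}=0$ and $\psi|_{y=1}=c$ constant (the $x$-independent flux), extend in $x$ by a reflection operator that preserves constancy on $y=0,1$, and transition to the linear profile $cy$ via a cutoff so the result periodizes with period $6$. Your homogenize-then-cutoff step is algebraically identical to the paper's blend $\cC_\psi\theta(x)y+(1-\theta(x))\tilde\psi$, since that expression equals $\cC_\psi y+(1-\theta)(\tilde\psi-\cC_\psi y)$ with $\chi=1-\theta$.
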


%We prove these two lemmas in Section~\ref{sec:detailsfirststep}. 
Next, we state the local existence theorem, which is the workhorse of the paper. This proposition is stated on the set $\mathbb{T}\times[0,1]$ and demonstrates the local existence of smooth solutions near background shears.

\begin{proposition}
[Local existence near background shears]\label{lem:local:one}
Let $u_0,B_0\colon\mathbb{T}\times[0,1]\rightarrow\mathbb{R}^2$ be divergence-free vector fields with sufficiently small $H^r$ norm,
where $r\geq 3$ is an integer,
 and assume that the means of $u_0^2$ and $B_0^2$ vanish. Let $H\colon[0,1]\rightarrow\mathbb{R}$ be a smooth function depending on $B_0$, and $H_u\colon[0,1]\rightarrow\mathbb{R}$ a smooth function depending on $u_0$, as in~\eqref{eq:decomp}--\eqref{EQ08}. Then there exists a $\mathbb{T}$-periodic solution $(u,B,\nabla p)$, defined for $t\in[0,1]$, to the following MHD-type system which is close to
%\footnote{See Lemma~\ref{lem:one} for the precise quantification.} 
the background shear profiles $\ushear=H_u(t)e_x$ and $\Bshear=H(t)e_x$ and solves
\begin{subequations}
\label{eq:MHD:shears}
\begin{align}
\partial_t u + u\cdot\nabla u + \nabla p &= B\cdot \nabla B \\
\partial_t B + u\cdot\nabla B - B\cdot\nabla u &= H'(t)e_x  \\
\Div u = \Div B &= 0 \\
u^2|_{y=0,1} = B^2|_{y=0,1} &= 0 \\
\int_{\mathbb{T}\times[0,1]} u^2(t) \equiv \int_{\mathbb{T}\times[0,1]} B^2(t) &\equiv 0 \\ 
 u|_{t=0}&=u_{0} \\ 
 B|_{t=0}&=B_0 \\
 \int_{[0,1]^2} B^1|_{t=1}&= 0
\,. \label{eq:mean:zero:B}
\end{align}
\end{subequations}
Furthermore, if $\int_{[0,1]^2} B_0^1=0$, then we may take $H\equiv 0$.
\end{proposition}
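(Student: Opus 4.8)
The plan is to pass to Els\"asser variables $z^{\pm} = u \pm B$, subtract the background shears $z^{\pm}_s = (H_u \pm H)e_x$, and take the two-dimensional scalar curl in order to eliminate the pressure. In Els\"asser form the system \eqref{eq:MHD:shears} reads
\begin{equation}\notag
\partial_t z^{+} + z^{-}\cdot\nabla z^{+} + \nabla p = H'(t)e_x, \qquad \partial_t z^{-} + z^{+}\cdot\nabla z^{-} + \nabla p = -H'(t)e_x,
\end{equation}
with a \emph{single} pressure $p$ shared by both equations. Setting $q^{\pm} = \curl z^{\pm}$ and using $\curl\nabla p = 0$ and $\curl(H'(t)e_x)=0$, the vorticities satisfy a coupled system of the schematic form $\partial_t q^{\pm} + z^{\mp}\cdot\nabla q^{\pm} = S^{\pm}$, where the $S^{\pm}$ are bilinear in $\nabla z^{+},\nabla z^{-}$ and hence of the same order as $q^{\pm}$. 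This curl step decouples the pressure and reduces the problem to transport-type evolution.

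Next I would establish existence for this vorticity system on the whole interval $[0,1]$. Since $r\geq 3 > d/2+1$ with $d=2$, the fields $z^{\pm}\in H^{r}$ embed into $W^{1,\infty}$ and $H^{r-1}$ is an algebra, so both the transport terms and the bilinear terms $S^{\pm}$ are controlled in $H^{r-1}$, giving an energy estimate of the form $\tfrac{d}{dt}(\norm{q^{+}}_{H^{r-1}}+\norm{q^{-}}_{H^{r-1}}) \lesssim (\norm{z^{+}}_{H^{r}}+\norm{z^{-}}_{H^{r}})(\norm{q^{+}}_{H^{r-1}}+\norm{q^{-}}_{H^{r-1}}) + (\text{shear terms})$. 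Because the data and the shear functions $H,H_u$ are small, a Gr\"onwall argument keeps the solution small and in $C([0,1];H^{r-1})$ up to $t=1$; the solution itself is produced by a standard mollification/iteration scheme whose uniform bounds come from this estimate. At each time I reconstruct $z^{\pm}$ from $q^{\pm}$ by solving $\Delta\Psi^{\pm}=q^{\pm}$ on $\T\times[0,1]$ with $\Psi^{\pm}$ constant in $x$ on $y=0,1$ and periodic in $x$, and taking $z^{\pm}=\nabla^{\perp}\Psi^{\pm}$; this automatically enforces $\Div z^{\pm}=0$, the impermeability $(z^{\pm})^2|_{y=0,1}=0$, and $\int(z^{\pm})^2=0$. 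The single free datum is the harmonic (shear) part, namely the mean of $(z^{\pm})^1$, which I prescribe through the boundary difference $\Psi^{\pm}|_{y=1}-\Psi^{\pm}|_{y=0}$.

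The main obstacle is to check that this Els\"asser/vorticity solution actually comes from a genuine solution of \eqref{eq:MHD:shears}, i.e.\ that the curl can be ``undone''. Recovering the pressure from each equation gives the curl-free fields $F^{+}:=H'(t)e_x-\partial_t z^{+}-z^{-}\cdot\nabla z^{+}$ and $F^{-}:=-H'(t)e_x-\partial_t z^{-}-z^{+}\cdot\nabla z^{-}$, each of which should equal $\nabla p$. First, $F^{+}$ and $F^{-}$ define the same pressure up to a spatial constant: their divergences agree because $\Div(z^{-}\cdot\nabla z^{+})=\Div(z^{+}\cdot\nabla z^{-})$ for divergence-free fields, and both obey the Neumann condition $\partial_y p=0$ on $y=0,1$ coming from the vanishing of the normal component and of the normal advection there. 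The genuine difficulty is that on the cylinder $\T\times[0,1]$ a curl-free field need not be an exact periodic gradient: there is a one-dimensional cohomology obstruction given by the circulation $c^{\pm}(t)=\tfrac16\int_{\T\times[0,1]}F^{\pm,1}$, and a nonzero $c^{\pm}$ is exactly the artificial forcing in the magnetic equation flagged in the introduction. Since $\tfrac{d}{dt}\int(z^{\pm})^1=\pm 6H'(t)$ (the advection and $\partial_x p$ integrate to zero by periodicity and the boundary conditions), one has $c^{\pm}=0$ precisely when the prescribed shear amplitudes satisfy $\tfrac{d}{dt}\,\mathrm{mean}\,(z^{\pm})^1=\pm H'$. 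This is what fixes $H$ and $H_u$ in \eqref{eq:decomp}--\eqref{EQ08} and is the content of Lemma~\ref{lem:equivalence}; it is the step that does not mesh with boundary penetration in the non-periodic problem, and is the reason the periodization is introduced. I expect this cohomology/pressure-consistency step to be the crux of the argument.

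Finally, with $c^{\pm}=0$ and $p^{+}=p^{-}=:p$, setting $u=(z^{+}+z^{-})/2$ and $B=(z^{+}-z^{-})/2$ yields a genuine solution of \eqref{eq:MHD:shears}. The terminal condition \eqref{eq:mean:zero:B} then follows from a direct computation on the unit square: the flux of $B^1u^1-u^1B^1$ through $x=0,1$ vanishes identically and $u^2=B^2=0$ on $y=0,1$, so $\tfrac{d}{dt}\int_{[0,1]^2}B^1=H'(t)$ and hence $\int_{[0,1]^2}B^1|_{t=1}=\int_{[0,1]^2}B_0^1+H(1)-H(0)=0$ by the choices $H(0)=\int_{[0,1]^2}B_0^1$ and $H(1)=0$. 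When $\int_{[0,1]^2}B_0^1=0$ one takes $H\equiv 0$, which kills the forcing and leaves the rest of the construction unchanged.
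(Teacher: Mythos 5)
Your overall architecture is the paper's: subtract the background shears, pass to Els\"asser variables to dodge the derivative loss, take the scalar curl to eliminate the pressure, reconstruct the fields from the vorticities via a stream function/div-curl problem, close with energy estimates and an iteration, and finally ``undo the curl'' via a periodic de~Rham argument (the paper's Lemma~\ref{L01}) plus a pressure-consistency step (Lemma~\ref{lem:equivalence}); your concluding computation of \eqref{eq:mean:zero:B} from $\tfrac{d}{dt}\int_{[0,1]^2}B^1=H'(t)$ and the endpoint values of $H$ is also exactly the paper's. The existence-scheme paragraph is sketchier than the paper (which solves the transported vorticity equations by characteristics, inverts with the div-curl Lemma~\ref{L02}, proves uniform $H^r$ bounds, and contracts in $H^1$), but that part is standard and your outline of it is sound.

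The genuine gap is in the step you yourself flag as the crux. You demand that \emph{both} circulations $c^{\pm}(t)=\tfrac16\int_{\mathbb{T}\times[0,1]}F^{\pm,1}$ vanish and claim this ``fixes $H$ and $H_u$.'' Compute them: the advection terms integrate to zero by periodicity and $(z^{\mp})^2|_{y=0,1}=0$, so $c^{\pm}=\pm H'-\tfrac{d}{dt}\,\mathrm{mean}\,(z^{\pm})^1$; since the shear ansatz \eqref{eq:decomp}--\eqref{EQ08} prescribes $\mathrm{mean}\,(z^{\pm})^1=H_u\pm H$, this gives $c^{+}=c^{-}=-H_u'$. Hence your requirement $c^{\pm}=0$ forces $H_u'\equiv 0$, which contradicts $H_u(0)=\int_{[0,1]^2}u_0^1$, $H_u(1)=0$ whenever $u_0^1$ has nonzero mean---and a non-constant $H_u$ is essential for the proposition's later use (the second step transports $\supp B$ with $H_u$). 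The correct obstruction is only the \emph{difference} $c^{+}-c^{-}$: a common circulation is harmless because it is absorbed into a pressure linear in $x$, namely $\pshear=H_u'(t)x$, which is non-periodic but has periodic gradient (note the proposition only asserts existence of $\nabla p$, not of a periodic $p$). So your statement that ``a nonzero $c^{\pm}$ is exactly the artificial forcing in the magnetic equation'' is wrong: the forcing corresponds to $c^{+}\neq c^{-}$, and in this setup $c^{+}-c^{-}=0$ automatically. Relatedly, your derivation $\tfrac{d}{dt}\int(z^{\pm})^1=\pm 6H'$ assumes $\int\partial_x p=0$, i.e.\ that $p$ is already periodic---circular at precisely the point being established; in the vorticity formulation the means of $(z^{\pm})^1$ are free data that must be prescribed as $H_u\pm H$, not chosen to kill $c^{\pm}$. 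The paper avoids all of this by absorbing $\partial_t\ushear=\nabla\pshear$ and cancelling $\partial_t\Bshear$ against $H'e_x$ \emph{before} curling, so that the perturbation equations have genuinely zero circulation and Lemma~\ref{L01} applies; the equality of the two Els\"asser pressures is then proved by showing $q_1-q_2$ is harmonic with homogeneous Neumann data and zero mean, a computation that uses $\nabla\ushear=\nabla\Bshear=0$. With your bookkeeping corrected along these lines (require only $c^{+}=c^{-}$, and admit the linear-in-$x$ pressure component), your proof goes through and coincides with the paper's.
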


For the precise quantification of how close the solution needs to be to the background shear, cf.~Lemma~\ref{lem:one}.

%\subsubsection{Details for the first step}\label{sec:detailsfirststep}

\begin{proof}[Proof of Lemma~\ref{lem:periodic:extension}]
Introduce the stream function
  \begin{equation}
   \psi(x,y)
   = 
   -
   \int_{\ell}
       u^{\perp}(\tilde x, \tilde y) \cdot (d\tilde x,d\tilde y)
   \,,
   \label{EQ07}
  \end{equation}
where $\ell$ denotes a sufficiently regular path from $(0,0)$ to $(x,y)$, which satisfies
$\nabla^{\perp}\psi=u$. Note
that the integral is independent of a chosen path since
$\Div u=0$. Clearly,
the condition $u^2|_{y=0,1}=0$ implies that
$\psi|_{y=0}=0$ and $\psi$ is constant on the upper boundary $\{y=1\}$, whose value
we denote by
$\cC_\psi$.
Since it is needed below, note that, in particular,
  \begin{equation}
   \int_{0}^{1} u^1(x,y) \,dy = \cC_{\psi} 
   \label{eq:psi:constant}
   ,
  \end{equation}
i.e., the integral $\int_{0}^{1} u^1(x,y) \,dy$ is independent of $x$.
Let $\tilde \psi\colon[-1,2]\times[0,1]\rightarrow\mathbb{R}^2$ be an $H^r$ Sobolev extension/reflection operator over $x=0$ and $x=1$. By the explicit formula 
for such an extension operator (cf.~Evans~\cite{evans}),
%\footnote{See for example Evans \cite{evans}.} 
$\tilde\psi$ is still constant on $\{y=0\}$ and $\{y=1\}$ for $x\in[-1,2]$.  Therefore $\tilde u =\nabla^\perp \tilde \psi$ satisfies $\tilde u^2|_{y=0,1}=0$ on the extended set. 

Now, let $\theta\colon[-1,2]\rightarrow[0,1]$ be a function depending only on $x\in[-1,2]$ which satisfies 
\begin{equation}\notag
    \theta(x)=0 \quad \textnormal{if} \quad x\in[-\sfrac{1}{2},\sfrac{3}{2}]
\end{equation}
and
\begin{equation}\notag
    \theta(x)=1 \quad \textnormal{if} \quad x\in[-1,-\sfrac{3}{4}] \cup [\sfrac{7}{4},2]
    \,.
\end{equation}
Define 
$\psi_{\mathbb{T}}\colon[-1,2]\times[0,1]\rightarrow\mathbb{R}$ by 
$$ \psi_{\mathbb{T}} = \cC_\psi \theta(x) y + (1-\theta(x)) \tilde \psi(x,y) \, .  $$
Then due to $\tilde\psi(x,0)=0$ and $\tilde\psi(x,1)=\cC_\psi$, the function $\psi_{\mathbb{T}}$ is constant on $y=0,1$, i.e., on the entire upper and lower boundaries of the extended domain $[-1,2]\times[0,1]$.  
%To ensure that the mean of the second component $u_E^2$ of the new vector field $u_E$ vanishes, we perform a reflection over 
%$\{x=2\}$ to produce $\tilde\psi_{\mathbb{T}}\colon[-1,5]\times[0,1]\rightarrow\mathbb{R}$.  
Since $\psi_{\mathbb{T}}$ is uniformly equal to $\cC_\psi y$ for $x\in[-1,-\sfrac{3}{4}]\cup[\sfrac{7}{4},2]$
due to the properties of $\theta$,
we may extend $\tilde\psi_{\mathbb{T}}$ 
periodically in $x$ with period~$6$, i.e., we may assume that it is defined on $\mathbb{T}$
with preserved smoothness properties.
Then define $u_E = \nabla^\perp\tilde \psi_{\mathbb{T}}$. 
%The simple reflection  with respect to $\{x=2\}$
%then ensures that $u_E$ is \emph{odd} in $x$ with respect to $\{x=2\}$, and so
Moreover,
$$  \int_{\mathbb{T}\times[0,1]} u_E^2 =  \int_{[-1,5]\times[0,1]} \partial_x  \tilde \psi_{\mathbb{T}} = 0 \, , $$
by $x$-periodicity of $\tilde \psi_{\mathbb{T}}$,
concluding the proof.
\end{proof}
%\nts{double-check all $[-1,5]$}

Now, we turn to the proof of Proposition~\ref{lem:local:one}.
We look for a solution of this system which satisfies
\begin{subequations}\label{eq:decomp}
\begin{align}
    u &= \tilde u + H_u(t)e_x = \tilde u + \ushear \\
    H_u(0)&= \int_{[0,1]^2}u_0^1, \quad H_u(1)=0 \\
    \int_{[0,1]^2} \tilde u(t) &= 0 \qquad \forall t\in[0,1]
\end{align}
\end{subequations}
and
\begin{subequations}\label{EQ08}
\begin{align}
    B &= \tilde B + H(t)e_x = \tilde B + \Bshear \\
    H(0)&= \int_{[0,1]^2}B_0^1, \quad H(1)= 0 \\
    \int_{[0,1]^2} \tilde B(t) &= 0 \qquad \forall t\in[0,1] 
\, .
\end{align}
\end{subequations}
With this ansatz in mind and noting that  $\nabla \ushear=\nabla
\Bshear=0$, and that $\partial_t \ushear$ is equal to a pressure
$\partial_x\pshear$ which is periodic (although $\pshear$ itself is \emph{not} periodic), \eqref{eq:MHD:shears} now reads
\begin{subequations}
\label{eq:MHD:shear:decomp}
\begin{align}
\partial_t \tilde u + ( \tilde u + \ushear ) \cdot\nabla\tilde u + \nabla q &= (\tilde B + \Bshear) \cdot \nabla \tilde B \\
\partial_t \tilde B + (\tilde u + \ushear) \cdot \nabla \tilde B - (\tilde B + \Bshear) \cdot \nabla \tilde u &= 0 \label{eq:B:pert:one} \\
\Div \tilde u = \Div \tilde B &= 0 \\
\tilde u^2|_{y=0,1} = \tilde B^2|_{y=0,1} &= 0 \\
\int_{\mathbb{T}\times[0,1]} \tilde u^2(t) \equiv \int_{\mathbb{T}\times[0,1]} \tilde B^2(t) &\equiv 0 \\ 
 \tilde u|_{t=0}&=u_{0} - \ushear|_{t=0} \\ 
 \int_{[0,1]^2} \tilde u^1(t) &\equiv 0 \\ 
 \tilde B|_{t=0}&=B_0 - \Bshear|_{t=0} \\
 \int_{[0,1]^2} \tilde B^1(t)&\equiv 0
 \,.
\end{align}
\end{subequations}
Observe that $\int_{[0,1]^2}\tilde u^{1}(t)=0$ is equivalent to $\int_{{\mathbb T}\times[0,1]}\tilde u^{1}(t)=0$ by
\eqref{eq:psi:constant} resulting from
the divergence-free condition.
Similarly, $\int_{[0,1]^2}\tilde B^{1}(t)=0$ is equivalent to $\int_{{\mathbb T}\times[0,1]}\tilde B^{1}(t)=0$.
%implying
%$(d/dx)(\int_{0}^{1} u^{1}dy)=0$.
We shall prove that one can solve this system for $\tilde u$, $\tilde B$, and $q$ which are $\mathbb{T}$-periodic in $x$ and that $q\colon\mathbb{T}\times[0,1]\rightarrow\mathbb{R}$ solves an elliptic problem that enforces  $\Div \tilde u=0$ and $\tilde u^2|_{y=0,1} =0 $: 
%\footnote{\red{I've done my best to check this but we should look again! Making sure we have the right $q$ is part of what we need to improve the Rissel-Wang result. This is the stickiest part ... }}
\begin{subequations}
\notag
\begin{align}
\Delta q &= \Div(-\tilde u\cdot\nabla\tilde u - \ushear\cdot\nabla\tilde u + (\tilde B + \Bshear)\cdot \nabla \tilde B ) \\
\partial_y q|_{y=0,1} &= 0 \\
\int_{\mathbb{T}\times[0,1]} q &=0 \, .
\end{align}
\end{subequations}
Note that the MHD system exhibits a loss of derivatives; thus in order
to solve this system,
%\footnote{MHD exhibits an irritating loss of derivatives if one is not careful about setting up the local existence result.} 
we need to switch to the Els\"asser variables
\begin{equation}\notag
z_1 = \tilde u + \tilde B, \qquad z_2 = \tilde u - \tilde B \, .  
\end{equation}
In these variables, the equations in \eqref{eq:MHD:shear:decomp} become
\begin{subequations}
\label{eq:MHD:shear:elsasser}
\begin{align}
\partial_t z_1 + ( z_2 + \ushear - \Bshear ) \cdot\nabla z_1 + \nabla q &= 0 \\
\partial_t z_2 + ( z_1 + \ushear + \Bshear ) \cdot\nabla z_2 + \nabla q &= 0 \\
\Div z_1 = \Div z_2 &= 0 \\
z_1^2|_{y=0,1} = z_2^2|_{y=0,1} &= 0 \label{eq:elsasser:normal} \\
\int_{\mathbb{T}\times[0,1]} z_1(t) \equiv \int_{\mathbb{T}\times[0,1]} z_2(t) &\equiv 0 \label{eq:elsasser:mean} \\ 
 z_1|_{t=0}&= u_{0} - \ushear|_{t=0} + B_0 - \Bshear|_{t=0} \\ 
  z_2|_{t=0}&= u_{0} - \ushear|_{t=0} - B_0 + \Bshear|_{t=0} \, .
\end{align}
\end{subequations}
Note that the conditions on the means of $\tilde u^1$, $\tilde u^2$, $\tilde B^1$, and $\tilde B^2$ have been consolidated into \eqref{eq:elsasser:mean}, asserting that the means of \emph{both} components of $z_1$ and $z_2$ vanish. Taking the curl of the first two equations in \eqref{eq:MHD:shear:elsasser} yields
\begin{subequations}
\label{eq:MHD:shear:elsasser:curl}
\begin{align}
\omega_1 &= \nabla^\perp \cdot z_1 \\
\omega_2 &= \nabla^\perp \cdot z_2 \\
\partial_t \omega_1 + ( z_2 + \ushear - \Bshear ) \cdot\nabla \omega_1 &= -\partial_k z_1^\ell \epsilon_{\ell j} \partial_j z_2^k \label{eq:curl:one} \\
\partial_t \omega_2 + ( z_1 + \ushear + \Bshear ) \cdot\nabla \omega_2 &= -\partial_k z_2^\ell \epsilon_{\ell j} \partial_j z_1^k \, . \label{eq:curl:two}
\end{align}
\end{subequations}
Lemma~\ref{lem:equivalence} below shows that if we have solved this ``vorticity-Els\"asser-MHD" system, where we have substituted \eqref{eq:MHD:shear:elsasser:curl} for the first two equations in \eqref{eq:MHD:shear:elsasser}, then in fact we have solved \eqref{eq:MHD:shear:decomp}. We first need the following De~Rham-type result.

\begin{lemma} [Periodic De~Rham's theorem]
\label{L01}
Assume that $v\in L^{2}_{\text{loc}}({\mathbb R}\times[0,1])$ is $L$-periodic in the $x$ variable, 
where $L>0$, 
and suppose that it satisfies
$\nabla^{\perp}\cdot v = 0$ 
and $\int_{[0,L]\times[0,1]} v^{1} = 0$.
Then there exists a function
$q\in H^{1}_{\text{loc}}({\mathbb R}\times[0,1])$, which is $L$-periodic in the $x$ variable,
and satisfies
  \begin{equation}
   v=\nabla q   
   \label{EQ01}
  \end{equation}
on ${\mathbb R}\times (0,1)$.
\end{lemma}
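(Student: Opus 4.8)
The statement asserts that a curl-free, $L$-periodic vector field $v$ on the strip $\mathbb{R}\times[0,1]$ with vanishing mean of $v^1$ over one period is a gradient of an $L$-periodic scalar potential. The plan is to construct $q$ by integration and then verify periodicity using the vanishing-mean hypothesis, which is precisely what makes the potential single-valued as a function on the cylinder $\mathbb{T}_L\times[0,1]$.

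First I would define the candidate potential by line integration, $q(x,y)=\int_{\gamma} v\cdot(\mathrm{d}\tilde x,\mathrm{d}\tilde y)$, where $\gamma$ is a path from a fixed basepoint $(0,0)$ to $(x,y)$ lying in the simply connected domain $\mathbb{R}\times[0,1]$. The condition $\nabla^\perp\cdot v=\partial_x v^2-\partial_y v^1=0$ is exactly the statement that the one-form $v^1\,\mathrm{d}x+v^2\,\mathrm{d}y$ is closed; since $\mathbb{R}\times[0,1]$ is simply connected, the integral is path-independent (this is the classical Poincaré lemma, and the regularity $v\in L^2_{\loc}$ is handled by the usual distributional formulation: $q\in H^1_{\loc}$ and $\nabla q=v$ in the sense of distributions on $\mathbb{R}\times(0,1)$). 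This yields \eqref{EQ01} directly.

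The main obstacle, and the only place the hypotheses genuinely enter beyond closedness, is establishing $L$-periodicity of $q$ in $x$. This does \emph{not} follow automatically from periodicity of $v$: translating by one period changes $q$ by the ``period integral'' $q(x+L,y)-q(x,y)=\int_{(x,y)}^{(x+L,y)} v\cdot\mathrm{d}\ell$, and by closedness this integral is independent of $y$, hence equals the constant $\int_0^L v^1(\tilde x, y_0)\,\mathrm{d}\tilde x$ for any fixed $y_0$. Averaging this identity over $y_0\in[0,1]$ shows the constant equals $\int_{[0,L]\times[0,1]} v^1=0$ by hypothesis. Therefore $q(x+L,y)=q(x,y)$, i.e.\ $q$ is $L$-periodic. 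I would present this period-integral computation carefully since it is the crux; it also explains why the mean condition is imposed on $v^1$ rather than $v^2$ (the latter's period integral is automatically controlled by periodicity of $v$ and the strip geometry).

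To make the argument rigorous at the stated $L^2_{\loc}$ regularity, rather than relying on pointwise line integrals I would instead mollify: let $v_\eps=v*\rho_\eps$ be a standard mollification (respecting $x$-periodicity), which is smooth, still curl-free, still $L$-periodic, and whose $v^1_\eps$ has vanishing period-average. For each $\eps$ the smooth argument above produces a periodic potential $q_\eps$ with $\nabla q_\eps=v_\eps$, normalized by $\int_{[0,L]\times[0,1]} q_\eps=0$. Since $\nabla q_\eps=v_\eps\to v$ in $L^2_{\loc}$ and the $q_\eps$ have controlled mean, the Poincaré inequality on the cylinder gives that $q_\eps$ is Cauchy in $H^1_{\loc}$; the limit $q$ is the desired $L$-periodic $H^1_{\loc}$ function satisfying $\nabla q=v$. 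This simultaneously delivers the regularity claim and preserves periodicity in the limit.
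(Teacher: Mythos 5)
Your proposal is correct in substance, and at the crux it coincides with the paper: the periodicity of $q$ is obtained in both cases from the observation that $q(x+L,y)-q(x,y)$ is a constant (you compute it as a period integral of the closed form; the paper notes $\nabla\bigl(q(x+L,y)-q(x,y)\bigr)=0$ on the connected strip) and that this constant equals $\int_{[0,L]\times[0,1]} v^{1}=0$, which is exactly the paper's computation $0=\int_{[0,L]\times[0,1]}\partial_{1}q=\int_{0}^{1}(q(L,y)-q(0,y))\,dy=a$. Where you differ is the existence-and-regularity step: the paper simply invokes the classical De~Rham theorem \cite[Proposition I.1.1]{Temam} to produce $q\in\mathcal{D}'$ with $v=\nabla q$, and then \cite[Proposition I.1.2(i)]{Temam} (a N\v{e}cas-type lemma) to upgrade to $q\in H^{1}_{\text{loc}}$ up to the closed strip, whereas you build $q$ by hand via the Poincar\'e lemma on the simply connected strip, regularized by mollification. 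Your route is more elementary and self-contained, but it has one technical wrinkle you should address: a ``standard mollification'' $v*\rho_{\eps}$ is not defined on the closed strip $\mathbb{R}\times[0,1]$ near the boundary lines $y=0,1$ (and a reflection extension would not preserve curl-freeness without boundary conditions on $v^{2}$, which are not assumed). So the smooth approximation and the zero-mean Poincar\'e inequality can only be run on interior sub-cylinders $\mathbb{T}_{L}\times[\delta,1-\delta]$, yielding $q\in H^{1}_{\text{loc}}(\mathbb{R}\times(0,1))$ with $\nabla q=v$; to recover the stated regularity $q\in H^{1}_{\text{loc}}(\mathbb{R}\times[0,1])$ up to the boundary you still need the fact that a distribution with $L^{2}$ gradient on a bounded Lipschitz domain lies in $H^{1}$ --- which is precisely the Temam/N\v{e}cas result the paper cites. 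With that one citation (or an equivalent difference-quotient argument) inserted, your proof is complete.
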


\begin{proof}[Proof of Lemma~\ref{L01}]
By the classical De~Rham's theorem \cite[Proposition I.1.1]{Temam}, there exists 
a distribution $q\in {\mathcal D}'({\mathbb R}\times(0,1))$ such that
\eqref{EQ01} holds.
Using \cite[Proposition I.1.2(i)]{Temam},
we have
$q\in H^{1}_{\text {loc}}({\mathbb R}\times[0,1])$, so it only remains
to establish periodicity.
By the periodicity of $v$, we have
$\nabla (q(x+L,y)-q(x,y))=0$, 
for $(x,y)\in{\mathbb R}\times(0,1)$,
which implies that
$q(x+L,y)-q(x,y)=a$, 
for all $(x,y)\in {\mathbb R}\times(0,1)$,
where $a\in {\mathbb R}$ is a constant.
Since $ 0 =\int_{[0,L]\times[0,1]} v^{1}= \int_{[0,L]\times[0,1]} \partial_{1} q = \int_{[0,1]} (q(L,y) - q(0,y)) = a$, we get $a=0$, implying the $L$-periodicity of~$q$.
%which implies that $q(x+L,y)=q(x,y)$ for $(x,y)\in{\mathbb R}\times(0,1)$.
%This implies that the function
%$p(x,y)=q(x,y)+a x$ is $L$-periodic in the $x$ variable. Note that by
%\eqref{EQ01} we have
%  \begin{equation}
%   v^1(x,y)=\partial_1 p(x,y) - a
%   \comma (x,y)\in {\mathbb R}\times(0,1)
%   \label{EQ02}
%  \end{equation}
%Integrating \eqref{EQ02} on $[0,L]\times[0,1]$ and using the assumption
%$\int_{[0,L]\times[0,1]} v^1 = 0$ together with
%$\int_{[0,L]\times[0,1]} \partial_{1}q = 0$, by the periodicity of $q$,
%we obtain $a=0$, showing that $q$ is $L$-periodic in the $x$ variable.
\end{proof}

We note in passing that any smooth vector field 
$v\colon\mathbb{T}\times[0,1]\rightarrow\mathbb{R}^2$ which satisfies $\int_{\mathbb{T}\times[0,1]} v^1 = 0$ allows a unique $L^2(\mathbb{T}\times[0,1])$-orthogonal decomposition of the form
\begin{equation}\notag
    v = \nabla p + \nabla^\perp q \, , \qquad \partial_y p|_{y=0,1} = v^2|_{y=0,1} \, , \qquad q|_{y=0,1}=0 \, , \qquad \int_{\mathbb{T}\times[0,1]} p = \int_{\mathbb{T}\times[0,1]} q = 0 \, ,
\end{equation}
where $p,q\colon\mathbb{T}\times[0,1]\rightarrow\mathbb{R}$ are smooth and periodic. We construct $q$ 
%using the Lax-Milgram theorem 
as the solution to the elliptic problem
\begin{subequations}\notag
\begin{align}
    \Delta q &= \nabla^\perp\cdot v \\
    q|_{y=0,1} &= 0 \\
    \int_{\mathbb{T}\times[0,1]} q &= 0 \, .
\end{align}
\end{subequations}
Now, considering $v-\nabla^\perp q$, we have
$\nabla^\perp\cdot(v - \nabla^\perp q)=0$
and 
%$\int_{\mathbb{T}\times[0,1]} (v-\nabla^\perp q)^{1}=0$.  
$\int_{\mathbb{T}\times[0,1]} (v^{1}+\partial_{2} q)=0$.
Applying Lemma~\ref{L01} to $v - \nabla^\perp q$, we may write it as the gradient of a periodic function $p$, which without loss of generality may be taken to have zero mean. The $L^2$-orthogonality is immediate from integration by parts, the fact $ q|_{y=0,1}=0$ by construction, and the periodicity in $x$ of $v$, $p$, and $q$. Uniqueness follows from the construction, in particular the imposition of the mean-zero conditions.

\begin{lemma}[Solving vorticity-Els\"asser MHD]\label{lem:equivalence}
Solving \eqref{eq:MHD:shear:elsasser} but with \eqref{eq:MHD:shear:elsasser:curl} taking the place of the first two equations in \eqref{eq:MHD:shear:elsasser} is equivalent to solving \eqref{eq:MHD:shear:decomp}.  Consequently, solving either provides a solution to \eqref{eq:MHD:shears}.
\end{lemma}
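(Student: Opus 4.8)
The plan is to prove the equivalence in two directions, of which only one carries content. The \emph{forward} direction is immediate: if $(\tilde u,\tilde B,q)$ solves \eqref{eq:MHD:shear:decomp}, then $z_1=\tilde u+\tilde B$ and $z_2=\tilde u-\tilde B$ solve \eqref{eq:MHD:shear:elsasser} with the \emph{common} pressure $q$ (the two Els\"asser momentum equations are the sum and difference of the $\tilde u$- and $\tilde B$-equations), and taking $\nabla^\perp\cdot$ of the first two equations produces \eqref{eq:MHD:shear:elsasser:curl}; the remaining constraints transfer verbatim. The substance is the \emph{reverse} direction, where I must reconstruct the Els\"asser velocities and, crucially, a \emph{single} pressure from a solution of the vorticity system.

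So first I would, at each fixed time, reconstruct $z_1,z_2$ from $\omega_1,\omega_2$ as the unique $\mathbb{T}$-periodic, divergence-free vector fields with $z_i^2|_{y=0,1}=0$, vanishing means of both components, and $\nabla^\perp\cdot z_i=\omega_i$ --- i.e. via a stream-function/elliptic solve as in the remark following Lemma~\ref{L01}. Then I would set $f_i := -\partial_t z_i - a_i\cdot\nabla z_i$, with $a_1 = z_2+\ushear-\Bshear$ and $a_2 = z_1+\ushear+\Bshear$. Because the vorticity equations \eqref{eq:curl:one}--\eqref{eq:curl:two} are exactly the curls of the Els\"asser momentum equations, one has $\nabla^\perp\cdot f_i = -\partial_t\omega_i - \nabla^\perp\cdot(a_i\cdot\nabla z_i)=0$; moreover $\int f_i^1 = -\partial_t\int z_i^1 - \int \Div(a_i z_i^1)=0$, the flux term vanishing since $a_i^2|_{y=0,1}=0$ (here I use \eqref{eq:elsasser:normal}) and by $x$-periodicity. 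Lemma~\ref{L01} then yields periodic scalars $q_1,q_2$ with $f_i=\nabla q_i$, so that each $z_i$ solves its own momentum equation $\partial_t z_i + a_i\cdot\nabla z_i + \nabla q_i = 0$.

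The main obstacle --- and the heart of the lemma --- is to show that these two pressures coincide, $\nabla q_1 = \nabla q_2$, since only then do the momentum equations share the common pressure demanded by \eqref{eq:MHD:shear:elsasser}, equivalently only then does subtracting them reproduce the forcing-free $\tilde B$-equation \eqref{eq:B:pert:one} rather than leaving a spurious $\tfrac12\nabla(q_1-q_2)$ term. Setting $\Pi := q_1 - q_2$, subtracting the two momentum equations gives $\tfrac12\nabla\Pi = G$, where $G := -\partial_t\tilde B - (\tilde u+\ushear)\cdot\nabla\tilde B + (\tilde B + \Bshear)\cdot\nabla\tilde u$. Taking the divergence, the time derivative drops by $\Div\tilde B=0$ and the two quadratic terms cancel after relabeling indices (both equal $\partial_i\tilde u^k\,\partial_k\tilde B^i$), giving $\Delta\Pi = 2\Div G = 0$; and evaluating $G^2$ at $y=0,1$, every term vanishes using $\tilde B^2|_{y=0,1}=\tilde u^2|_{y=0,1}=0$ together with their tangential $x$-derivatives, so $\partial_y\Pi|_{y=0,1}=0$. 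Thus $\Pi$ is harmonic, $\mathbb{T}$-periodic in $x$, and satisfies homogeneous Neumann conditions at $y=0,1$; a one-line Fourier argument in $x$ (each nonzero mode forced to vanish, the zero mode forced constant) shows $\Pi$ is constant, whence $\nabla\Pi = 0$.

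With $\nabla q_1 = \nabla q_2 =: \nabla q$ established, the two Els\"asser momentum equations close \eqref{eq:MHD:shear:elsasser}; undoing the change of variables $\tilde u = \tfrac12(z_1+z_2)$, $\tilde B = \tfrac12(z_1-z_2)$ and matching the constraints \eqref{eq:elsasser:normal}--\eqref{eq:elsasser:mean} with those of \eqref{eq:MHD:shear:decomp} recovers a solution of \eqref{eq:MHD:shear:decomp}. Finally, reinstating the shears via $u = \tilde u + \ushear$, $B = \tilde B + \Bshear$ turns \eqref{eq:MHD:shear:decomp} into \eqref{eq:MHD:shears}, the absorbed $H'(t)e_x$ reappearing as the prescribed forcing; this yields the last assertion. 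I expect the pressure-agreement step to be where essentially all the domain-specific structure (periodicity in $x$, impermeability at $y=0,1$) is spent, consistent with the discussion preceding the lemma that boundary penetration is precisely what obstructs this argument on the original square.
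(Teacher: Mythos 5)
Your proposal is correct and follows essentially the same route as the paper: recover $q_1,q_2$ via the periodic De~Rham lemma (Lemma~\ref{L01}), checking the mean-zero condition on the first components through $x$-periodicity and the impermeability $z_i^2|_{y=0,1}=0$, and then show $q_1-q_2$ is harmonic with homogeneous Neumann data, hence constant. The only cosmetic differences are that you perform the cancellation in $\Delta(q_1-q_2)$ by subtracting the momentum equations in the $(\tilde u,\tilde B)$ variables (the paper does the equivalent index-relabeling computation directly in Els\"asser variables, using $\nabla\ushear=\nabla\Bshear=0$ and $\Div z_i=0$), and that you verify the Neumann condition $\partial_y(q_1-q_2)|_{y=0,1}=0$ explicitly from the boundary trace of the momentum equations, a step the paper leaves implicit.
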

\begin{proof}[Proof of Lemma~\ref{lem:equivalence}]
Assume that we have a solution of \eqref{eq:MHD:shear:elsasser:curl}.
It is easy to check that
  \begin{align}\notag
     &
     \nabla^\perp \cdot \left( \partial_t z_1 + ( z_2+\ushear-\Bshear )\cdot\nabla z_1 \right)
     \\&\indeq
     =
     \partial_t \omega_1 + ( z_2 + \ushear - \Bshear ) \cdot\nabla \omega_1 +\partial_k z_1^\ell \varepsilon_{\ell j} \partial_j z_2^k 
     = 0
    \,.
   \notag
  \end{align}
In order to apply Lemma~\ref{L01}, 
we need to verify that the integral 
over $\mathbb{T}\times[0,1]$
of the first component of 
$\partial_t z_1 + ( z_2+\ushear-\Bshear )\cdot\nabla z_1$ vanishes.
For the first term, $\partial_{t}z_1$, this is clear, while for the second we have
  \begin{equation}
    \int_{{\mathbb T}\times[0,1]}
      ( z_2+\ushear-\Bshear )\cdot\nabla z_1^{1}
    = 
    \int_{{\mathbb T}\times[0,1]}
      \partial_{i}
       (
        (z_2^{i}+\ushear^{i}-\Bshear^{i} )z_1^{1}
       )
    = 0
     \,,
   \notag
  \end{equation}
where in last equality we separately integrate for $i=1$ and $i=2$. When $i=1$, we use periodicity, while
when $i=2$ it is important that the
expression inside the parentheses vanishes for $y=0$ and $y=1$.
By Lemma~\ref{L01}, there exists a $\mathbb{T}$-periodic function $q_1$ such that 
$$  \partial_t z_1 + (z_2 + \ushear - \Bshear ) \cdot \nabla z_1 = - \nabla q_1 \, ,  $$
where $q_1\colon\mathbb{T}\times[0,1]\rightarrow\mathbb{R}$ solves
\begin{subequations}\notag
\begin{align}
    -\Delta q_1 &= \partial_k \bigl( (z_2+\ushear-\Bshear)^\ell \partial_\ell z_1^k \bigr) \\
    \partial_2 q_1|_{y=0,1} &= 0 \\
    \int_{\mathbb{T}\times[0,1]} q_1 &= 0  \, .
\end{align}
\end{subequations}
We similarly have 
$$  \partial_t z_2 + ( z_1+\ushear-\Bshear )\cdot\nabla z_2 = - \nabla q_2 \, , $$
where
\begin{subequations}\notag
\begin{align}
    -\Delta q_2 &= \partial_k \bigl( (z_1+\ushear+\Bshear)^\ell \partial_\ell z_2^k \bigr) \\
    \partial_2 q_2|_{y=0,1} &= 0 \\
    \int_{\mathbb{T}\times[0,1]} q_2 &= 0 \, .
\end{align}
\end{subequations}
We find that $q_1-q_2$ is $\mathbb{T}$-periodic in $x$ and solves
\begin{subequations}\notag
\begin{align}
    -\Delta (q_1-q_2) &= 0 \\
    \partial_2 (q_1-q_2)|_{y=0,1} &= 0 \\
    \int_{\mathbb{T}\times[0,1]} (q_1-q_2) &= 0 \, ,
\end{align}
\end{subequations}
from where $q_1=q_2$ and we have a solution to \eqref{eq:MHD:shear:elsasser};
to obtain that $q_1-q_2$ is harmonic, 
we write
  \begin{align}
   \begin{split}
     -\Delta(q_1-q_2)        
      &= -\partial_k\partial_{\ell} \bigl( (z_2+\ushear-\Bshear)^\ell z_1^k \bigr)
        + \partial_k\partial_{\ell} \bigl( (z_1+\ushear+\Bshear)^\ell  z_2^k \bigr)
     \\&
      = -\partial_k\partial_{\ell} \bigl( (\ushear-\Bshear)^\ell z_1^k \bigr)
        + \partial_k\partial_{\ell} \bigl( (\ushear+\Bshear)^\ell  z_2^k \bigr)
      \\&
      =
       -\partial_{\ell} \bigl( (\ushear-\Bshear)^\ell \partial_k z_1^k \bigr)
        + \partial_{\ell} \bigl( (\ushear+\Bshear)^\ell  \partial_kz_2^k \bigr)
      = 0
     \,,
   \end{split}
   \notag
  \end{align}
where we used $\nabla \ushear=\nabla \Bshear=0$ in the third equality and the divergence-free condition in the last.
Reconstructing the equations for $\tilde u$ and $\tilde B$ from $z_1$ and $z_2$ as usual and using that $q_1=q_2$ then shows that we have a solution to \eqref{eq:MHD:shear:decomp}. To conclude the proof, we must demonstrate the other direction of the equivalence, but this only 
amounts to taking the curl of the first two equations in \eqref{eq:MHD:shear:elsasser}.  
\end{proof}

Returning to the proof of Proposition~\ref{lem:local:one}, we will be done if we can set up and solve the fixed point iteration:
\begin{subequations}
\label{eq:shear:fixed:point}
\begin{align}
\omega\onen &= \nabla^\perp \cdot z\onen \label{eq:shear:first} \\
\omega\twon &= \nabla^\perp \cdot z\twon \\
\partial_t \omega\onen + ( z\twonminus + \ushear - \Bshear ) \cdot\nabla \omega\onen &= -\partial_k z\onenminus^\ell \varepsilon_{\ell j} \partial_j z\twonminus^k \label{eq:shear:one} \\
\partial_t \omega\twon + ( z\onenminus + \ushear + \Bshear ) \cdot\nabla \omega\twon &= -\partial_k z\twonminus^\ell \varepsilon_{\ell j} \partial_j z\onenminus^k \label{eq:shear:two} \\
\Div z\onen = \Div z\twon &= 0 \label{eq:shear:three} \\
z\onen^2|_{y=0,1} = z\twon^2|_{y=0,1} &= 0 \label{eq:shear:four}\\
 z\onen|_{t=0}&= u_{0} - \ushear|_{t=0} + B_0 - \Bshear|_{t=0} \\ 
  z\twon|_{t=0}&= u_{0} - \ushear|_{t=0} - B_0 + \Bshear|_{t=0} \\
 \int_{\mathbb{T}\times[0,1]} z\onen(t)\equiv\int_{\mathbb{T}\times[0,1]} z\twon(t)&\equiv 0 \, . \label{eq:shear:last}
\end{align}
\end{subequations}
We split the proof into the following three lemmas.

\begin{lemma}[Constructing the iterates]\label{lem:construct}
There exists a sequence $\{(z\onen,z\twon)\}_{n=0}^\infty$ which is well-defined and satisfies the equations in \eqref{eq:shear:fixed:point}.
\end{lemma}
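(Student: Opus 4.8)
The plan is to read \eqref{eq:shear:fixed:point} as a one-step map and build the sequence by induction on $n$, where at each stage the pair $(z\onen,z\twon)$ is required to be \emph{admissible}, meaning $\mathbb{T}$-periodic, of class $C([0,1];H^r(\mathbb{T}\times[0,1]))$, divergence-free, tangent to the top and bottom boundaries ($z^2|_{y=0,1}=0$), and of vanishing mean. For the base case I set $z_{1,0}(t)$ and $z_{2,0}(t)$ to equal, for all $t\in[0,1]$, the prescribed initial data appearing in \eqref{eq:shear:fixed:point}; by the construction \eqref{eq:decomp}--\eqref{EQ08} (so that $\tilde u_0,\tilde B_0$ are divergence-free, boundary-tangent, and mean-zero), these data are admissible. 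The inductive step is then: given an admissible pair $(z\onenminus,z\twonminus)$, first solve the two linear transport equations \eqref{eq:shear:one}--\eqref{eq:shear:two} for the scalar vorticities $\omega\onen,\omega\twon$, and then recover $z\onen,z\twon$ from $\omega\onen,\omega\twon$ by a Biot--Savart (Dirichlet stream function) inversion, so as to enforce \eqref{eq:shear:first}--\eqref{eq:shear:four} and \eqref{eq:shear:last}.

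For the transport step, observe that the advecting fields $z\twonminus+\ushear-\Bshear$ and $z\onenminus+\ushear+\Bshear$ are divergence-free (the shears are spatially constant) and satisfy the impermeability condition at $y=0,1$ (their second components vanish there, since the shears are horizontal and $z\twonminus^2,z\onenminus^2$ vanish on the boundary by the inductive hypothesis). Hence their flows preserve the strip $\mathbb{T}\times[0,1]$. The forcings $-\partial_k z\onenminus^\ell\varepsilon_{\ell j}\partial_j z\twonminus^k$ and $-\partial_k z\twonminus^\ell\varepsilon_{\ell j}\partial_j z\onenminus^k$ lie in $C([0,1];H^{r-1})$ because $H^{r-1}$ is an algebra for $r\geq 3$, and the initial vorticities $\nabla^\perp\!\cdot z_i|_{t=0}$ lie in $H^{r-1}$. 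Standard linear transport theory for a boundary-tangent, divergence-free, $H^r$ velocity field then produces unique solutions $\omega\onen,\omega\twon\in C([0,1];H^{r-1})$ on the whole interval $[0,1]$; tangency is exactly what makes the boundary terms in the $H^{r-1}$ energy estimate vanish. No smallness or short-time restriction is needed here, since the equations are linear in $(\omega\onen,\omega\twon)$ — the quantitative bounds are deferred to the subsequent lemmas.

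For the recovery step, I solve the Dirichlet problems $\Delta\phi\onen=\omega\onen$, $\phi\onen|_{y=0,1}=0$, $\mathbb{T}$-periodic (and likewise for $\phi\twon$), and set $z\onen=\nabla^\perp\phi\onen$, $z\twon=\nabla^\perp\phi\twon$. Elliptic regularity gives $\phi\onen,\phi\twon\in C([0,1];H^{r+1})$, hence $z\onen,z\twon\in C([0,1];H^r)$, and by construction $\nabla^\perp\!\cdot z\onen=\Delta\phi\onen=\omega\onen$, so \eqref{eq:shear:first}--\eqref{eq:shear:two} hold. Divergence-freeness \eqref{eq:shear:three} is automatic, and the Dirichlet condition forces the stream function to be constant on $y=0,1$, giving $z\onen^2|_{y=0,1}=\partial_x\phi\onen|_{y=0,1}=0$, which is \eqref{eq:shear:four}. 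The crucial mean-zero condition \eqref{eq:shear:last} is then automatic: since $z\onen=(-\partial_y\phi\onen,\partial_x\phi\onen)$,
\[
\int_{\mathbb{T}\times[0,1]} z\onen^2=\int_{\mathbb{T}\times[0,1]}\partial_x\phi\onen=0,\qquad
\int_{\mathbb{T}\times[0,1]} z\onen^1=-\int_{\mathbb{T}}\bigl(\phi\onen(x,1)-\phi\onen(x,0)\bigr)\,dx=0,
\]
the first by $x$-periodicity and the second by the homogeneous Dirichlet condition. Conversely, the converse direction of the decomposition discussed after Lemma~\ref{L01} shows that an admissible field is \emph{uniquely} the $\nabla^\perp$ of a periodic Dirichlet stream function, so the recovery of the (admissible) initial data from its curl reproduces it exactly and the initial conditions in \eqref{eq:shear:fixed:point} are met. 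This closes the induction and produces the desired sequence.

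The step I expect to be the main obstacle is the transport regularity up to the boundary: propagating $H^{r-1}$ bounds for $\omega\onen,\omega\twon$ on the strip requires the advecting fields to be tangent at $y=0,1$, which is precisely why the admissibility condition $z^2|_{y=0,1}=0$ must be carried through the induction and why the shear profiles are taken horizontal. The companion subtlety is verifying that the elliptic recovery faithfully reproduces \emph{all} of the structural constraints — in particular the mean-zero condition \eqref{eq:shear:last} on both components — which is where the periodic De~Rham result of Lemma~\ref{L01} and the divergence-free/impermeable structure are used; the remaining verifications are routine.
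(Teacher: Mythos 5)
Your proposal is correct and follows essentially the same route as the paper: time-independent first iterates given by the initial data, solving the linear transport equations \eqref{eq:shear:one}--\eqref{eq:shear:two} for the vorticities along characteristics in the boundary-tangent, periodic setting, and recovering $z\onen,z\twon$ via the periodic div-curl problem with a Dirichlet stream function. The only cosmetic difference is that you re-derive the div-curl inversion and its uniqueness inline, whereas the paper packages exactly this construction (including the mean-zero verifications and the uniqueness argument used to match the initial data) as Lemma~\ref{L02} and cites it.
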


\begin{lemma}[Uniform bound in high regularity]\label{lem:one}
The sequence $\{(z\onen,z\twon)\}_{n=0}^\infty$ belongs to the ball\\ $B_{C\varepsilon}(0)\subset C([0,1];{H^r}(\mathbb{T}\times[0,1]))$ provided the initial data for $z\onen$ and $z\twon$ from \eqref{eq:shear:fixed:point} are smaller than ${\varepsilon}$ in $C([0,1];{H^r}(\mathbb{T}\times[0,1]))$, where $\varepsilon$ is sufficiently small and $C$ is a constant.
%\footnote{See for example \eqref{eq:elliptic}.}.  
\end{lemma}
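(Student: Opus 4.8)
The plan is to prove the bound by induction on $n$, the key structural point being that the vorticity–Els\"asser formulation \eqref{eq:shear:one}--\eqref{eq:shear:two} exhibits no loss of derivatives: each $\omega\onen$ (resp.\ $\omega\twon$) solves a \emph{linear} transport equation whose drift $z\twonminus+\ushear-\Bshear$ (resp.\ $z\onenminus+\ushear+\Bshear$) is divergence-free and tangent to $\partial(\mathbb{T}\times[0,1])$, and whose forcing $-\partial_k z\onenminus^\ell\varepsilon_{\ell j}\partial_j z\twonminus^k$ is a \emph{product of first derivatives} of the previous iterates. Since $r-1>1$, the space $H^{r-1}(\mathbb{T}\times[0,1])$ is an algebra, so this forcing is controlled in $H^{r-1}$ — exactly the regularity carried by $\omega\onen$ — by $\|z\onenminus\|_{H^r}\|z\twonminus\|_{H^r}$. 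Throughout I use the elliptic (Biot–Savart) reconstruction: because each $z\onen$ is divergence-free, has vanishing normal trace on $\{y=0,1\}$, is periodic in $x$, and has zero mean by \eqref{eq:shear:last}, Poincar\'e's inequality together with elliptic regularity for its stream-function representation give $\|z\onen\|_{H^r}\lesssim\|\omega\onen\|_{H^{r-1}}$, and symmetrically for $z\twon$, so it suffices to bound the vorticities in $H^{r-1}$.

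For the energy estimate, fix a multi-index $\alpha$ with $|\alpha|\le r-1$, apply $\partial^\alpha$ to \eqref{eq:shear:one}, pair with $\partial^\alpha\omega\onen$ in $L^2(\mathbb{T}\times[0,1])$, and sum over $\alpha$ (and symmetrically for $\omega\twon$). The top-order transport term integrates to zero after one integration by parts: the drift is divergence-free, its normal component vanishes on $\{y=0,1\}$ since $z\twonminus^2|_{y=0,1}=0$ and the shears $\ushear,\Bshear$ are purely horizontal, and the $x$-boundary terms cancel by periodicity; at top order this is justified by a standard Friedrichs mollification. Because $\ushear$ and $\Bshear$ are constant in space, $\nabla\ushear=\nabla\Bshear=0$, so the shears drop out of every commutator and contribute nothing to the growth of the energy. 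The commutator $[\partial^\alpha,(z\twonminus+\ushear-\Bshear)\cdot\nabla]\omega\onen$ is bounded by the Kato–Ponce/Moser inequality by $\lesssim\|z\twonminus\|_{H^r}\|\omega\onen\|_{H^{r-1}}$; at the borderline regularity $r=3$ the single delicate term, in which both derivatives land on the drift and $\nabla\omega\onen$ is left over, is handled by H\"older in $L^4\times L^4$ and the two-dimensional embedding $H^1\hookrightarrow L^4$. Finally the forcing pairs to $\lesssim\|z\onenminus\|_{H^r}\|z\twonminus\|_{H^r}\|\omega\onen\|_{H^{r-1}}$.

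Writing $Y_n(t):=\|z\onen(t)\|_{H^r}^2+\|z\twon(t)\|_{H^r}^2$, which is comparable to the vorticity energy $\|\omega\onen\|_{H^{r-1}}^2+\|\omega\twon\|_{H^{r-1}}^2$ on which the estimate is carried out, and $M_{n-1}:=\sup_{t\in[0,1]}Y_{n-1}(t)^{1/2}$, the previous paragraph yields
\begin{equation}\notag
\frac{d}{dt}Y_n \lesssim M_{n-1}\,Y_n + M_{n-1}^2\,Y_n^{1/2}\, ,
\end{equation}
and hence, after dividing by $2Y_n^{1/2}$ and applying Gr\"onwall on $[0,1]$,
\begin{equation}\notag
M_n \le e^{CM_{n-1}}\bigl(Y_n(0)^{1/2}+CM_{n-1}^2\bigr)\, .
\end{equation}
The initial data in \eqref{eq:shear:fixed:point} is independent of $n$ and, by hypothesis, satisfies $Y_n(0)^{1/2}\le\varepsilon$. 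Assuming inductively $M_{n-1}\le C_\ast\varepsilon$ and choosing $C_\ast\ge 4$ and then $\varepsilon$ small enough that $e^{CC_\ast\varepsilon}\le 2$ and $CC_\ast^2\varepsilon\le 1$, we obtain $M_n\le 2(\varepsilon+CC_\ast^2\varepsilon^2)\le 4\varepsilon\le C_\ast\varepsilon$, which closes the induction; the base case $n=0$ is immediate from the initialization in Lemma~\ref{lem:construct}. This places the whole sequence in $B_{C\varepsilon}(0)\subset C([0,1];H^r(\mathbb{T}\times[0,1]))$ with $C=C_\ast$.

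The main obstacle is the energy estimate of the second paragraph on a domain with boundary: one must verify that the top-order transport term produces no boundary contribution — which hinges precisely on the drift being divergence-free and tangent to $\{y=0,1\}$, a property guaranteed by the Els\"asser structure together with the horizontal shears — and that the commutator closes in $H^{r-1}$ without losing a derivative, the genuinely borderline case occurring at the minimal regularity $r=3$. Once these estimates are in place the Gr\"onwall/induction argument is routine.
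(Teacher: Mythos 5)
Your proposal is correct and follows essentially the same route as the paper: induction on $n$, energy estimates on the vorticities $\omega_{1,n},\omega_{2,n}$ (with the transport term killed by the divergence-free, boundary-tangent, $x$-periodic drift), recovery of $\|z_{i,n}\|_{H^r}$ from $\|\omega_{i,n}\|_{H^{r-1}}$ via the div-curl elliptic estimate of Lemma~\ref{L02}, and a Gr\"onwall/smallness argument to close the induction. The only difference is presentational: the paper writes out the $L^2$ and $H^1$ estimates explicitly and delegates the orders up to $r-1$ to ``similar arguments,'' whereas you carry the general $\partial^\alpha$ estimate with the Kato--Ponce/Moser commutator, including the borderline $r=3$ term via $L^4\times L^4$ and $H^1\hookrightarrow L^4$.
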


The smallness of data can always be ensured by a sufficiently powerful 
$\varepsilon$-dependent rescaling of the original problem, cf.~\eqref{eq:further:simp}.

\begin{lemma}[Contraction in $H^1$]\label{lem:two}
The sequence $\{(z\onen,z\twon)\}_{n=1}^\infty$ satisfies the contraction inequality
\begin{align}\label{eq:contraction}
    &\| z\onen - z\onenplus \|_{L^\infty([0,1];H^1(\mathbb{T}\times[0,1]))} + \| z\twon - z\twonplus \|_{L^\infty([0,1];H^1(\mathbb{T}\times[0,1]))} \notag\\
    &\indeq \leq \frac{1}{2} ( \| z\onen - z\onenminus \|_{L^\infty([0,1];H^1(\mathbb{T}\times[0,1]))} + \| z\twon - z\twonminus \|_{L^\infty([0,1];H^1(\mathbb{T}\times[0,1]))} )
\end{align}
provided that $\varepsilon$, $z_{1,0}$, and $z_{2,0}$ are sufficiently small.
\end{lemma}

\begin{proof}[Proof of Proposition~\ref{lem:local:one}]
Assuming the conclusions of the preceding three lemmas, the contraction mapping principle provides a unique limit point of the sequence $(z\onen, z\twon)$, that is, we have a unique solution to the system of equations \eqref{eq:MHD:shear:elsasser}, and thus \eqref{eq:MHD:shears}, concluding the proof of Proposition~\ref{lem:local:one}. 
\end{proof}

Thus it remains to prove Lemmas~\ref{lem:construct}--\ref{lem:two}. To prove Lemma~\ref{lem:construct}, we first need to state an existence and uniqueness theorem for a periodic div-curl problem.

\begin{lemma}[Periodic div-curl problem]
\label{L02}
Consider the system
  \begin{align}
    \Div z = 0 \comma
    \curl z = \omega \comma
    z^{2}|_{y=0,1} = 0 \comma
    \int_{\mathbb{T}\times[0,1]} z = 0
    \,.
   \label{EQ03}
  \end{align}
For every $\omega\in H^{s}(\mathbb{T}\times[0,1])$, where $s\geq1$, there
exists a unique solution $z\in H^{s+1}(\mathbb{T}\times[0,1])$.
Also, the mapping $\omega\mapsto z$ is continuous
from $H^{s}(\mathbb{T}\times[0,1])$ to $H^{s+1}(\mathbb{T}\times[0,1])$.
\end{lemma}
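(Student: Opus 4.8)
The plan is to reduce the first–order div–curl system \eqref{EQ03} to a scalar second–order elliptic boundary value problem by means of a stream function, solve the latter with standard elliptic theory, and then read off each of the four conditions. Since $\Div z = 0$ and $z^2$ vanishes on $y=0,1$, I would first argue that $z$ admits a globally defined stream function $\psi$ on $\mathbb{T}\times[0,1]$ with $z=\nabla^\perp\psi$, periodic in $x$. The only obstruction to periodicity of $\psi$ is the $x$-flux $\Phi(y)=\int_0^6 z^2(x,y)\,dx$; using $\Div z=0$ one finds $\Phi'(y)=-\int_0^6\partial_x z^1\,dx=0$ by periodicity, while $\Phi(0)=0$ because $z^2|_{y=0}=0$, so $\Phi\equiv 0$ and $\psi$ is indeed $x$-periodic. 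Under this identification $\curl z=\nabla^\perp\cdot\nabla^\perp\psi=\Delta\psi$, the condition $z^2|_{y=0,1}=\partial_x\psi|_{y=0,1}=0$ says that $\psi$ is constant on each horizontal boundary, and $\int_{\mathbb{T}\times[0,1]} z^1=-\int_{\mathbb{T}\times[0,1]}\partial_y\psi=-6(\psi|_{y=1}-\psi|_{y=0})$, so the mean–zero condition is precisely the statement that those two boundary constants coincide.

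For existence I would therefore solve the Dirichlet problem
\begin{equation}\notag
\Delta\psi = \omega \quad\text{on } \mathbb{T}\times[0,1], \qquad \psi|_{y=0,1}=0, \qquad \psi \ \text{periodic in } x,
\end{equation}
which is uniquely solvable for every $\omega\in H^s$ with no compatibility condition, since the Dirichlet Laplacian on the strip has trivial kernel. Expanding in a Fourier series in $x$ reduces this to the family of two–point problems $\psi_k''-k^2\psi_k=\omega_k$ on $[0,1]$ with $\psi_k(0)=\psi_k(1)=0$; summing the resulting one–dimensional estimates (equivalently, invoking elliptic regularity up to the boundary for the mixed periodic/Dirichlet problem) yields $\psi\in H^{s+2}(\mathbb{T}\times[0,1])$ with $\|\psi\|_{H^{s+2}}\lesssim\|\omega\|_{H^s}$. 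Setting $z=\nabla^\perp\psi$ gives $z\in H^{s+1}$ and $\|z\|_{H^{s+1}}\lesssim\|\omega\|_{H^s}$; by construction $\Div z=0$ and $\curl z=\Delta\psi=\omega$, while $z^2|_{y=0,1}=\partial_x\psi|_{y=0,1}=0$ and $\int_{\mathbb{T}\times[0,1]} z=0$ follow from $\psi|_{y=0,1}=0$ and periodicity. The asserted continuity of $\omega\mapsto z$ is immediate from this linear estimate.

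For uniqueness, suppose $z$ solves the homogeneous system ($\omega=0$). The flux argument above produces a periodic stream function $\phi$ with $z=\nabla^\perp\phi$ and $\Delta\phi=\curl z=0$; the boundary and mean conditions force $\phi$ to be constant on $y=0,1$ with equal constants, so after subtracting that constant $\phi$ is harmonic with zero Dirichlet data and periodic, whence $\phi$ is constant and $z=\nabla^\perp\phi=0$.

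I expect the main technical point to be the justification that an arbitrary divergence–free field $z$ with $z^2|_{y=0,1}=0$ possesses a single–valued $x$-periodic stream function, i.e.\ the vanishing of the flux $\Phi$, since this is exactly what couples the impermeability condition, the divergence–free constraint, and the $x$-periodicity, and it is needed both to set up the reduction and to close the uniqueness argument. Once this is in place, the elliptic solvability, the up-to-the-boundary regularity, and the verification of the four conditions are all routine via the Fourier decomposition in $x$.
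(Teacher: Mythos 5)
Your proof is correct, and the existence half coincides with the paper's: both solve the Dirichlet problem $\Delta\psi=\omega$, $\psi|_{y=0,1}=0$, periodic in $x$, set $z=\nabla^\perp\psi$, and verify the mean-zero conditions exactly as you do (the $\int z^1$ computation via $\psi|_{y=0,1}=0$ is word-for-word the paper's). Where you genuinely diverge is uniqueness. The paper uses the \emph{curl-free} structure: since $\curl z=0$ and $\int_{\mathbb{T}\times[0,1]} z^{1}=0$, it invokes its periodic De~Rham result (Lemma~\ref{L01}) to write $z=\nabla\phi$ with $\phi$ periodic, and then $\Div z=0$ together with $z^2|_{y=0,1}=0$ makes $\phi$ a solution of the homogeneous Neumann problem, hence constant. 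You instead use the \emph{divergence-free} structure: your flux argument ($\Phi'(y)=0$ by periodicity and $\Div z=0$, $\Phi(0)=0$ by impermeability) produces a single-valued periodic stream function $\phi$ with $z=\nabla^\perp\phi$, after which $\curl z=0$ makes $\phi$ harmonic with constant Dirichlet data on each horizontal boundary, and the condition $\int z^1=0$ forces the two boundary constants to agree, so $\phi$ is constant. The two routes are dual to each other, and both correctly pinpoint that the mean condition is what excludes the one-dimensional space of harmonic fields $c\,e_x$ on the cylinder (your $\phi=-cy$ with unequal boundary constants, versus the paper's non-periodic potential $\phi = cx$ ruled out by Lemma~\ref{L01}'s mean hypothesis). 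What your approach buys is self-containedness: the flux computation replaces the appeal to the periodic De~Rham lemma, so your uniqueness proof stands alone; what the paper's approach buys is economy, since Lemma~\ref{L01} is already established and reused elsewhere. Your Fourier-in-$x$ justification of the $H^{s+2}$ estimate is also slightly more explicit than the paper's tacit appeal to elliptic regularity, and is sound.
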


\begin{proof}[Proof of Lemma~\ref{L02}]
To obtain the existence, first solve
$\Delta\psi=\omega$ for $\psi\colon\mathbb{T}\times[0,1]\rightarrow\mathbb{R}$ 
with the boundary conditions $\psi|_{y=0,1}=0$ on top and bottom 
and periodic boundary conditions in $x$. 
Then set $z=\nabla^\perp\psi$. It is easy to check that $z$ 
satisfies the first three conditions in \eqref{EQ03}.
For the fourth condition in \eqref{EQ03}, we have
%To obtain $    \int_{\mathbb{T}\times[0,1]} z^{2}=0$, 
$\int_{\mathbb{T}\times[0,1]} z^{2}=\int_{\mathbb{T}\times[0,1]} \partial_{1}\psi = 0$ since $\psi$ is periodic in $x$. Also, $ \int_{\mathbb{T}\times[0,1]} z^{1}=-\int_{\mathbb{T}\times[0,1]} \partial_{2}\psi = \int_{\mathbb{T}} (\psi(x,0) - \psi(x,1)) = 0$, since $\psi|_{y=0,1}=0$.
%%we note that the integral in the $y$ variable vanishes by the fundamental theorem of calculus. To obtain $\int_{\mathbb{T}\times[0,1]} z^{1}=0$,
%we use that if $z=(z^1,z^2)$ satisfies the first three conditions in \eqref{EQ03}, then 
%so does $(z^1-a,z^2)$, where $a$ is an arbitrary constant. Thus we conclude
%by choosing a suitable~$a$.

For uniqueness, assume that $z$ is periodic and satisfies \eqref{EQ03} with $\omega=0$.
By $\curl z=0$ 
and $\int_{\mathbb{T}\times[0,1]} z^{1} = 0$, there exists a periodic function
$\phi$ such that
$z=\nabla \phi$. The divergence condition and
$    z^{2}|_{y=0,1}=0$ then imply that $\phi$ solves the homogeneous Neumann problem
$\Delta\phi=0$ and $\partial_{2}\phi|_{y=0,1}=0$. Thus $\phi$ is constant, from where
$z=0$.
\end{proof}

Note that the proof of uniqueness does not use
$\int_{\mathbb{T}\times[0,1]}z^{2}=0$ showing that 
this is a consequence of $\Div z=0$ and $\int_{\mathbb{T}\times[0,1]} z^{1} = 0$.
%imply $\int_{\mathbb{T}\times[0,1]} z^{2} = 0$.

%for $z_{i,n}(t)\colon\mathbb{T}\times[0,1]\rightarrow\mathbb{R}^2$ with $i=1,2$ and $t\in[0,1]$. 

\begin{proof}[Proof of Lemma~\ref{lem:construct}]
We begin by defining the time-independent first iterates
\begin{equation}\label{eq:first:iterates}
z_{1,0} = u_0 - \ushear |_{t=0} + B_0 - \Bshear|_{t=0} \, , \qquad z_{2,0} = u_0 - \ushear |_{t=0} - B_0 + \Bshear|_{t=0} \, . 
\end{equation}
From \eqref{eq:decomp}--\eqref{EQ08} we have that $z_{1,0}^1$ and $z_{2,0}^1$ have vanishing averages over $\mathbb{T}\times[0,1]$ for all times, as desired in \eqref{eq:shear:last}. From the assumptions of Proposition~\ref{lem:local:one}, we have that the averages of $z_{1,0}^2$ and $z_{2,0}^2$ also vanish for all times.  Both first iterates are also clearly divergence free and satisfy \eqref{eq:shear:four}.  

Now assume that the pair $(z\onenminus,z\twonminus)$ is given for $n\geq 1$ and satisfies \eqref{eq:shear:first}--\eqref{eq:shear:last}. To construct $(z\onen, z\twon)$, we first solve \eqref{eq:shear:one} and \eqref{eq:shear:two} for $\omega\onen$ and $\omega\twon$, respectively, using the method of characteristics on the set $\mathbb{T}\times[0,1]$. This is possible because the velocity fields and forcing terms for both equations are periodic in $x$, and the velocity fields do not penetrate the boundaries at $y=0$ and $y=1$. 
We then solve the $\Div$-$\curl$ problem
\begin{subequations}\notag
\begin{align}
    \Div z_{i,n}(t) &= 0 \\
    \curl z_{i,n}(t) &= \omega_{i,n}(t) \\
    z_{i,n}^2(t)|_{y=0,1} &= 0 \\
    \int_{\mathbb{T}\times[0,1]} z_{i,n}(t) &= 0
    \,,
\end{align}
\end{subequations}
using Lemma~\ref{L02}.
for $z_{i,n}(t)\colon\mathbb{T}\times[0,1]\rightarrow\mathbb{R}^2$ with $i=1,2$ and $t\in[0,1]$, obtaining the estimate
%\footnote{The way to solve this elliptic problem is to solve $\Delta\psi\onen=\omega\onen$ for $\psi\colon\mathbb{T}\times[0,1]\rightarrow\mathbb{R}$ with boundary conditions $\psi\onen|_{y=0,1}=0$ on top and bottom and periodic boundary conditions in $x$. Then setting $z\onen=\nabla^\perp\psi\onen$, it is easy to check that $z_{1,n}$ is a solution. In particular, the last two conditions follow from the vanishing boundary conditions for $\psi\onen$ on $y=0,1$ and the periodic boundary conditions in $x$.}  We now have $(z\onen,z\twon)$ solving \eqref{eq:shear:first}--\eqref{eq:shear:last}, concluding the proof. We remark that there exists a constant $C_e$ such that for all $0\leq k\leq 3$, we have the elliptic estimates\footnote{ To get elliptic estimates and a Poincar\'e inequality, we can extend the domain $\mathbb{T}\times[0,1]$ to a smooth domain $\Omega_E$ such that $\{0\leq x \leq 1\}\times\{y=0,1\}\subset\partial\Omega_E$. Then since $\psi\onen=0$ at $y=0,1$, we can apply an extension procedure to $\psi\onen$ to produce $\psi\onen^E\colon\Omega_E\rightarrow\mathbb{R}$ which has norms bounded by that of $\psi\onen$ and vanishes on $\partial\Omega_E$. Using standard Poincar\'e/elliptic estimates for functions on $\Omega_E$ with vanishing boundary values and the boundedness of the extension operator gives the elliptic estimate. To prove the interpolation inequality, extend $\psi_{1,n}^E$ from $\Omega_E$ to $\mathbb{R}^2$.  Then appealing to the interpolation inequality on $\mathbb{R}^2$ and the boundedness of the extension operator again gives the result.}
\begin{align}
    \| z_{i,n} \|_{H^{r+1}(\mathbb{T}\times[0,1])} 
     &\leq C \| \omega_{i,n} \|_{H^r(\mathbb{T}\times[0,1])} 
   \notag
\end{align}
%\begin{subequations}\label{eq:elliptic}
%\begin{align}
%    \| z_{i,n} \|_{H^{r+1}(\mathbb{T}\times[0,1])} &\leq C_e \| \omega_{i,n} \|_{H^r(\mathbb{T}\times[0,1])} \\
%    \| \nabla z_{i,n} \|_{L^2(\mathbb{T}\times[0,1])}^2 &\leq C_e \| z_{i,n} \|_{L^2(\mathbb{T}\times[0,1])} \| z_{i,n} \|_{H^{2}(\mathbb{T}\times[0,1])} \, .
%\end{align}
%\end{subequations}
Finally, in order to obtain $z_{i,n}(0)=z_{0}$, we simply use
$\curl z_{i,n}(0)=\curl z_0$, the continuity of $\curl z_{i,n}$ in $t$,
and the continuity of the map $\omega\mapsto z$ in Lemma~\ref{L02}.
\end{proof}

\begin{proof}[Proof of Lemma~\ref{lem:one}]
The proof proceeds by induction on $n$ and a standard energy/Gr\"onwall argument for small data. The estimates for $n=0$ follow from the fact that we have defined the first iterates to be time-independent, cf.~\eqref{eq:first:iterates}. We now assume that the bounds have been shown for $(z\onenminus,z\twonminus)$ for $n\geq 1$ and show that the same bounds hold for $(z\onen,z\twon)$.

We first work towards $L^2$ bounds on $\omega\onen$ and $\omega\twon$.  Multiplying \eqref{eq:shear:one} by $\omega\onen$ and \eqref{eq:shear:two} by $\omega\twon$, integrating over $\mathbb{T}\times[0,1]$, and using that $z\onenminus$, $z\twonminus$, $\ushear$, $\Bshear$, $\omega\onen$, and $\omega\twon$ are periodic in $x$ and having vanishing second component at $y=0,1$, we obtain the energy inequalities
\begin{align}
    \frac{1}{2}\frac{d}{dt} \| \omega_{i,n} \|_{L^2(\mathbb{T}\times[0,1])}^2 
       &\leq 
      8 
     \| | \nabla z\onenminus | | \nabla z\twonminus | \|_{L^2(\mathbb{T}\times[0,1])}  
    \| \omega_{i,n} \|_{L^2(\mathbb{T}\times[0,1])} \notag\\
    &\leq 8 C^2 \varepsilon^2 \| \omega_{i,n} \|_{L^2(\mathbb{T}\times[0,1])} \, . \notag
\end{align}
To achieve the second inequality, we have used that {$H^s(\mathbb{T}\times[0,1])$ is an algebra when $s\geq 3$} and the inductive assumption on $z\onenminus$ and $z\twonminus$.  Integrating from $0$ to $t$ for $t\leq 1$, we obtain that for $t\in[0,1]$,
\begin{align}
    \| \omega_{i,n} (t) \|_{L^2(\mathbb{T}\times[0,1])} &\leq \frac{\varepsilon}{4} + 8 C^2\varepsilon^2 \leq \varepsilon \, , \notag
\end{align}
assuming $\varepsilon$ is sufficiently small and the assumption from Lemma~\ref{lem:one} on the size of the initial data.

Next, applying $\nabla$ to the equations \eqref{eq:shear:one} and \eqref{eq:shear:two}, integrating over $\mathbb{T}\times[0,1]$, using the same properties as before, and setting $i'=1$ if $i=2$ and $i'=2$ if $i=1$, we obtain the energy inequalities
\begin{align}
    \frac{1}{2}\frac{d}{dt} \| \nabla \omega_{i,n} \|_{L^2(\mathbb{T}\times[0,1])}^2 
        &\leq 
        8 \| \nabla( \nabla z\onenminus \tensor \nabla z\twonminus  ) \|_{L^2(\mathbb{T}\times[0,1])} \| \nabla \omega_{i,n} \|_{L^2(\mathbb{T}\times[0,1])} \notag\\
    &\qquad \qquad + \| \nabla z_{i',n-1} \|_{L^2(\mathbb{T}\times[0,1])} \| \nabla \omega_{i,n} \|^2_{L^2(\mathbb{T}\times[0,1])} \notag\\
    &\leq 8 C^2\varepsilon^2 \| \nabla \omega_{i,n} \|_{L^2(\mathbb{T}\times[0,1])} + C\varepsilon \| \nabla \omega_{i,n} \|_{L^2(\mathbb{T}\times[0,1])}^2 \, . \notag
\end{align}
To achieve the second inequality, we have again used that {$H^s(\mathbb{T}\times[0,1])$ is an algebra when $s\geq 3$} and the inductive assumptions on $z\onenminus$ and $z\twonminus$. This implies that
\begin{equation}\notag
    \| \nabla \omega_{i,n}(t) \|_{L^2(\mathbb{T}\times[0,1])} \leq \frac{\varepsilon}{4} + 8 C^2 \varepsilon^2 + \int_0^t C \varepsilon  \| \nabla \omega_{i,n}(s) \|_{L^2(\mathbb{T}\times[0,1])} \, ds \, ,
\end{equation}
and so from the integral form of Gr\"onwall's inequality, we obtain 
\begin{equation}\notag
    \| \nabla \omega_{i,n}(t) \|_{L^2(\mathbb{T}\times[0,1])} \leq \left( \frac{\varepsilon}{4} + 8 C^2 \varepsilon^2 \right) \exp\left(\int_0^1 C \varepsilon \, ds \right) \leq \varepsilon
\end{equation}
if $\varepsilon$ is chosen sufficiently small. Utilizing the elliptic estimates in Lemma~\ref{L02} and employing similar arguments but with higher-order spatial derivative concludes the proof.
\end{proof}

\begin{proof}[Proof of Lemma~\ref{lem:two}]
We set $\omega_1:=\omega\onenplus-\omega\onen$. 
From \eqref{eq:shear:one}, we find that $\omega_1$ satisfies the equation
\begin{align}
    &\partial_t\omega_1 + (z\twon + \ushear - \Bshear ) \cdot \nabla \omega_1 
   \notag\\&\indeq
    = ( z\twonminus - z\twon ) \cdot \nabla \omega\onen \notag
     - \partial_k z\onen^\ell \varepsilon_{\ell j} \partial_j z\twon^k + \partial_k z\onenminus^\ell \varepsilon_{\ell j} \partial_j z\twonminus^k \notag\\
    &\indeq= ( z\twonminus-z\twon ) \cdot \nabla \omega\onen - ( \partial_k z\onen^\ell - \partial_k z\onenminus^\ell ) \varepsilon_{\ell j} \partial_j z\twon^k \notag
    - \partial_k z\onenminus^\ell \varepsilon_{\ell j} ( \partial_j z\twon^k - \partial_j z\twonminus^k ) \, , \notag
\end{align}
with $\omega_1|_{t=0}=0$.  Multiplying by $\omega_1$, we find that 
\begin{align}
    \frac{1}{2}\frac{d}{dt} \| \omega_1 \|_{L^2(\mathbb{T}\times[0,1])}^2 &\lesssim \| z\twonminus - z\twon \|_{L^2(\mathbb{T}\times[0,1])} \| \omega_1 \nabla \omega\onen \|_{L^2(\mathbb{T}\times[0,1])} \notag\\
    &\quad + \| \nabla( z\onenminus - z\onen) \|_{L^2(\mathbb{T}\times[0,1])} \| \nabla z\twon \omega_1 \|_{L^2(\mathbb{T}\times[0,1])} \notag\\
    &\quad + \| \nabla( z\twonminus - z\twon) \|_{L^2(\mathbb{T}\times[0,1])} \| \nabla z\onenminus \omega_1 \|_{L^2(\mathbb{T}\times[0,1])} \notag\\
    &\lesssim \varepsilon \| \omega_1 \|_{L^2(\mathbb{T}\times[0,1])} \bigl( \| z\onenminus - z\onen \|_{H^1(\mathbb{T}\times[0,1])} + \| z\twonminus - z\twon \|_{H^1(\mathbb{T}\times[0,1])} \bigr) \, . \notag
\end{align}
% Using the Poincar\'e inequality from \eqref{eq:elliptic}, we have that
% \begin{equation}
%     \left\| z\onenplus - z\onen \right\|_{L^2(\mathbb{T}
%     \times[0,1])} \lesssim \left\| \omega\onenplus - \omega\onen \right\|_{L^2(\mathbb{T}
%     \times[0,1])} \, .
% \end{equation}
Using a Gr\"onwall argument and choosing $\varepsilon\ll 1$ sufficiently small to absorb any constants, we deduce that
\begin{align}
    &\| \omega\onen - \omega\onenplus \|_{L^\infty([0,1];L^2(\mathbb{T}\times[0,1]))} \notag\\
    &\indeq \leq \frac{1}{4C} \bigl( \| z\onen - z\onenminus \|_{L^\infty([0,1];H^1(\mathbb{T}\times[0,1]))} + \| z\twon - z\twonminus \|_{L^\infty([0,1];H^1(\mathbb{T}\times[0,1]))} \bigr) 
   \label{EQ06}
\, ,
\end{align}
where $C$ is sufficiently large.
Utilizing Lemma~\ref{L02}, with $s=1$ while assuming the constant $C$ in \eqref{EQ06} is sufficiently large, we get
\begin{align}
    &\| \omega\onen - \omega\onenplus \|_{L^\infty([0,1];L^2(\mathbb{T}\times[0,1]))} \notag\\
    &\indeq \leq \frac{1}{4} 
      \bigl( \| \omega\onen - \omega\onenplus \|_{L^\infty([0,1];L^2(\mathbb{T}\times[0,1]))} 
              + \| \omega\twon - \omega\twonplus \|_{L^\infty([0,1];L^2(\mathbb{T}\times[0,1]))} 
      \bigr) 
   \notag
\, .
\end{align}
Making the analogous estimate for $z\twon - z\twonplus$, and summing concludes the proof of \eqref{eq:contraction}.
\end{proof}

The first step is now as follows. The initial data $u_0$ and $B_0$ are extended using Lemma~\ref{lem:periodic:extension} to data which are divergence-free and $\mathbb{T}$-periodic with the means of $u^2$ and $B^2$ vanishing over $\mathbb{T}\times[0,1]$. Choosing suitable $\ushear=H_u(t)e_x$ and
$\Bshear=H(t)e_x$, we can drive the system to the state
such that the means of
$u$ and $B$ over $[0,6]\times[0,1]$  vanish at time $t=1$.

\subsection{Second step: expelling the magnetic field}\label{ss:two}
Recall that in the first step, we solved \eqref{eq:MHD:shear:decomp} using the ansatz \eqref{eq:decomp}--\eqref{EQ08}, which set $\ushear=H_u(t)e_x$.  However, at no point did we impose any restrictions on $H_u$. The purpose of the second step is to show that with an application of Lemma~\ref{lem:compact:truncation}, stated next, on the set $\mathbb{T}\times[0,1]$ and a smart choice of $H_u$, we can control the support of $B$ at later times.

\begin{lemma}[Truncating to a compactly supported data]\label{lem:compact:truncation}
Given an $H^r$ divergence-free and $\mathbb{T}$-periodic vector field $B\colon\mathbb{T}\times[0,1]\rightarrow\mathbb{R}^2$ with 
$$\int_{\mathbb{T}\times[0,1]}B^1 = 0 \comma B^2|_{y=0,1}=0 \, , $$
there exists an $H^{r}$-regular divergence-free $\Btrunc\colon\mathbb{T}\times[0,1]\rightarrow\mathbb{R}^2$ 
which satisfies $\Btrunc^2|_{y=0,1}=0$, with $\Btrunc \equiv 0$ 
for $x\in[\sfrac74,5]$ or $x\in[-1,-\sfrac34]$, 
and $\Btrunc=B$ for $x\in[0,1]$. Moreover, 
the mapping $B\mapsto B_T$ is linear and
we have the inequality
$$  \| \Btrunc \|_{H^{r}(\mathbb{T}\times[0,1])} \lesssim \| B \|_{H^r(\mathbb{T}\times[0,1])} \, .  $$
\end{lemma}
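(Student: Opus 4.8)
The plan is to realize $\Btrunc$ as $\nabla^\perp$ of a cut-off stream function, exactly as in the proof of Lemma~\ref{lem:periodic:extension}. First I would introduce the stream function $\psi\colon\mathbb{T}\times[0,1]\to\mathbb{R}$, defined by integrating $B$ along a path from a fixed basepoint and normalized so that $\psi|_{y=0}=0$. Since $\Div B=0$ the integral is path-independent and $\nabla^\perp\psi=B$; moreover $\nabla\psi=(B^2,-B^1)$, so $\|\nabla\psi\|_{H^r}\lesssim\|B\|_{H^r}$, while $\psi=-\int_0^y B^1$ together with the Poincar\'e inequality (using $\psi|_{y=0}=0$) gives $\|\psi\|_{H^{r+1}(\mathbb{T}\times[0,1])}\lesssim\|B\|_{H^r(\mathbb{T}\times[0,1])}$. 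Periodicity of $\psi$ in $x$ follows because $\nabla\psi=(B^2,-B^1)$ is periodic, so $\psi(x+6,y)-\psi(x,y)$ is a constant, and this constant equals $\psi(6,0)-\psi(0,0)=0$ since $\psi\equiv0$ on $\{y=0\}$.

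The decisive point is to pin down the boundary values of $\psi$ on $\{y=0,1\}$. The hypothesis $B^2|_{y=0,1}=0$ reads $\partial_x\psi|_{y=0,1}=0$, so $\psi$ is constant on each of $\{y=0\}$ and $\{y=1\}$; by our normalization the bottom constant is $0$, and I denote the top constant by $\mathcal{C}$. Writing $\mathcal{C}=\psi(x,1)-\psi(x,0)=-\int_0^1 B^1(x,y)\,dy$, which is independent of $x$ by the divergence-free condition (cf.~\eqref{eq:psi:constant}), and integrating in $x$, the mean condition $\int_{\mathbb{T}\times[0,1]}B^1=0$ forces $\mathcal{C}=0$. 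Hence $\psi$ vanishes on the entire top and bottom boundary, which is precisely what allows a cut-off in $x$ to preserve the impermeability condition.

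With $\psi$ in hand, I would fix a smooth, $6$-periodic cut-off $\chi=\chi(x)$ with $\chi\equiv1$ on a neighborhood of $[0,1]$ and $\chi\equiv0$ on $[\sfrac74,5]\cup[-1,-\sfrac34]$ (these two intervals glue across $x=5\sim x=-1$ into a single subinterval of $\mathbb{T}$), and set $\Btrunc:=\nabla^\perp(\chi\psi)$. Being a perpendicular gradient, $\Btrunc$ is automatically divergence-free and $\mathbb{T}$-periodic; on the neighborhood of $[0,1]$ where $\chi\equiv1$ one has $\Btrunc=\nabla^\perp\psi=B$, and on $[\sfrac74,5]\cup[-1,-\sfrac34]$ both $\chi$ and $\chi'$ vanish so $\Btrunc\equiv0$. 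The boundary condition $\Btrunc^2|_{y=0,1}=\partial_x(\chi\psi)|_{y=0,1}=(\chi'\psi+\chi\,\partial_x\psi)|_{y=0,1}=0$ holds because $\psi|_{y=0,1}=0$ (here the vanishing of $\mathcal{C}$ is used) and $\partial_x\psi|_{y=0,1}=B^2|_{y=0,1}=0$. Linearity of $B\mapsto\Btrunc$ is clear since each of $B\mapsto\psi$, $\psi\mapsto\chi\psi$, and $\psi\mapsto\nabla^\perp\psi$ is linear, and the estimate follows from $\|\Btrunc\|_{H^r}=\|\nabla^\perp(\chi\psi)\|_{H^r}\lesssim\|\chi\psi\|_{H^{r+1}}\lesssim\|\psi\|_{H^{r+1}}\lesssim\|B\|_{H^r}$, using that $\chi$ is a fixed smooth multiplier.

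The main obstacle is conceptual rather than computational: the construction succeeds only because $\psi$ can be arranged to vanish on \emph{both} horizontal boundaries simultaneously, and this is exactly where $\int_{\mathbb{T}\times[0,1]}B^1=0$ enters. Were $\mathcal{C}\neq0$, the product $\chi\psi$ would equal $\chi(x)\mathcal{C}$ on $\{y=1\}$, whose $x$-derivative $\chi'(x)\mathcal{C}$ is nonzero in the transition region, so $\Btrunc^2|_{y=1}$ would fail to vanish. This reflects the genuine obstruction that a nonzero mean horizontal field cannot be truncated while remaining divergence-free and impermeable, consistent with the necessity of the mean condition discussed after Theorem~\ref{th:main}.
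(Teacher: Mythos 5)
Your proof is correct and takes essentially the same approach as the paper: in both, the hypotheses $B^2|_{y=0,1}=0$ and $\int_{\mathbb{T}\times[0,1]}B^1=0$ (via the $x$-independence of $\int_0^1 B^1(x,y)\,dy$, cf.~\eqref{eq:psi:constant}) force the stream function $\psi$ to vanish on both horizontal boundaries, after which one sets $\Btrunc=\nabla^\perp(\chi\psi)$ with a $\mathbb{T}$-periodic cutoff in $x$. Your write-up simply makes explicit several details the paper leaves implicit, such as the periodicity of $\psi$, the $H^{r+1}$ bound on $\chi\psi$, and the linearity of $B\mapsto\Btrunc$.
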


\begin{proof}[Proof of Lemma~\ref{lem:compact:truncation}]
From \eqref{eq:psi:constant} and the assumption $\int_{\mathbb{T}\times[0,1]}B^1 = 0$, we have that the stream function $\psi$ for $B$,
as defined in \eqref{EQ07} with $u$ replaced by $B$, satisfies $\psi(x,0)=\psi(x,1)=0$ for all $x\in[0,1]$. 
%Without loss of generality, we may assume that $\psi(x,0)=\psi(x,1)=0$. 
Define 
$\Btrunc(x,t)=\nabla^\perp \left(\psi(x,y)\theta(x)\right)$ 
for a $\mathbb{T}$-periodic cutoff function $\theta$ which satisfies 
$\theta\equiv 1$ for $-\sfrac{1}{4}\leq x \leq \sfrac{5}{4}$ 
and $\theta \equiv 0$ for $-1\leq x\leq-\sfrac{3}{4}$ 
or $\sfrac74\leq x\leq 5$.  
Since $\psi(x,0)=\psi(x,1)=0$, we have that $\psi\theta$ is constant on $y=0,1$.  The rest of the assertions of Lemma~\ref{lem:compact:truncation} follow immediately.
\end{proof}

Now, we carry out the following.

\begin{enumerate}
    \item\label{item:one} \textbf{Compactly supported extensions of $u$ and $B$:\,} From Proposition~\ref{lem:local:one}, specifically \eqref{eq:mean:zero:B} and \eqref{eq:decomp}--\eqref{EQ08}, we have that the means of 
$u^1$ and $B^1$ 
at time $t=1$ vanish,
which is, by \eqref{eq:psi:constant}, equivalent to
averages of $u^1$ and $B^1$ over $\mathbb{T}\times[0,1]$ vanish.
Applying Lemma~\ref{lem:compact:truncation}, we can modify $B$ to achieve that
it stays the same in a neighborhood of $[0,1]\times[0,1]$ and it satisfies
$B=0$ for $x\in[\sfrac74,5]$ or $x\in[-1,-\sfrac34]$.

%\ntsf{can we eliminate the notation $\mathbb{T}_{\rm small}$ and $\mathbb{T}_{\rm large}$?}

    \item\label{item:two} \textbf{Application of Proposition~\ref{lem:local:one} with a smart choice of $H_u$:\,} 
Our methodology for the proof of Proposition~\ref{lem:local:one} involved the ansatz $u=\tilde u + \ushear$, cf.~\eqref{eq:decomp}--\eqref{EQ08}. Since the mean of $u$ now vanishes, we can take $H_u$ to vanish at time $t=1$. Since the mean of $B$ now vanishes as well, we can take $\Bshear$ to vanish uniformly in time. 
For simplicity, we re-parametrize time so that the old time $t=1$ is now $t=0$.  By a sufficiently strong $\varepsilon$-rescaling of the initial data cf.~\eqref{eq:further:simp} and Lemma~\ref{lem:one}, which asserts that the perturbation $\tilde u$ around $\ushear=H_u(t)e_x$ is smaller than $\varepsilon$, we can ensure that the Lagrangian trajectories of the full velocity $u=\ushear + \tilde u$ are ``within $\varepsilon$'' of the trajectories of $\ushear$. Specifically, choose $\ushear$ so that every point in the domain $[-1,5]\times[0,1]$ moves (monotonically and periodically) to the right by distance exactly $2.5$. Lemma~\ref{lem:one} then ensures that the trajectories of $u=\ushear+\tilde u$ move to the right by at least $2.25$ and at most $2.75$. 
    \item\label{item:three} \textbf{$B$ is expelled from the set $[0,1]^2$:\,} From the choice of $\Bshear\equiv 0$, we have that \eqref{eq:B:pert:one} now reads
    \begin{equation}\label{eq:vector:transport}
    \partial_t \tilde B + u\cdot \nabla \tilde B - \tilde B\cdot \nabla u = 0  \, .  
    \end{equation}
    This vector transport equation ensures that the support of $\tilde B$ follows the Lagrangian trajectories of $u$. From the previous item, we know that the Lagrangian trajectories of $u$ move to the right by at least $2.25$ and at most $2.75$.  Therefore, the support of $\tilde B$ after the application of Proposition~\ref{lem:local:one} with this choice of $H_u$ ensures that the support of $B$ moves from the set $\{-\sfrac{3}{4}\leq x \leq \sfrac{7}{4}\}\times[0,1]$ at time $t=0$ to the set $\{1.5 \leq x \leq 4.5 \}\times[0,1]$ at time $t=1$,
from where we conclude that
$\supp B\subset\cup_{m\in\mathbb{Z}}(1.5+6m,4.5+6m)$ at $t=1$.
Thus the magnetic field $\tilde B$ now vanishes inside the set $[0,1]^2$ at time $t=1$.
\end{enumerate}

We now provide some commentary explaining why \eqref{eq:mean:alt} is not sufficient for our method of proof. Note that from the assumption that $B^1$ has vanishing average in item~1, we have that $B=\tilde B$ in item~3. But  $B^1$ would \emph{not} have vanishing average if either the initial or ending data for the control problem does not have vanishing average, and we set $\Bshear=0$ in Proposition~\ref{lem:local:one}.  In these cases, a compactly supported divergence free extension which does not penetrate the upper and lower boundaries is in general not possible. Therefore, \eqref{eq:vector:transport} would now read 
$$  \partial_t \tilde B + u\cdot\nabla\tilde B - (B_E e_x+\tilde B)\cdot\nabla u = 0 \, , $$
where $B_E$ is the average of $B^1$, which is non-zero and preserved in time. This new equation does not transport the support of $\tilde B$, and so it is not clear how to ensure that $\tilde B$ leaves the domain $[0,1]^2$. Even if one could ensure that $\tilde B$ leaves this domain so that $B=B_Ee_x$ at some later time, this property would not be preserved upon application of a control method to the remaining part of the velocity. Control methods for Euler connect the desired initial and terminal data through some common state halfway through the time interval, usually the $0$ state; a reasonable guess for the MHD analogue would be to connect both states through $(u,B)=(0,B_Ee_x)$.  Since we cannot send the initial or terminal data to this state, and there is no obvious alternative, we instead connect the initial and terminal data through $(0,0)$, thus necessitating a forcing term in the equation for $B$.

\subsection{Third step: control for Euler}\label{ss:three}
Now that the magnetic field vanishes outside of the domain $[0,1]^2$,
%\footnote{This a rescaled extension of the original domain $[0,1]^2$; in particular, it contains the rescaled version of the original domain.} 
we truncate the extended domain $[-1,5]\times[0,1]$ back to $[0,1]^2$.  On $[0,1]^2$, we now have a vector field $u$ which does not necessarily vanish, but a magnetic field $B$ which vanishes.  Solving the MHD equations on $[0,1]^2$ with vanishing data for the magnetic field is clearly equivalent to solving the Euler equations on $[0,1]^2$. So applying any control method for Euler 
(\cite{coron1,coron2,Glass00})
will finish the proof.

%\section{The 3D case}\label{section:three}
%The results extends easily to the 3D case, when $\Omega=[0,1]^{3}$
%and use the notation $x=(x_1,x_2,x_3)$ for the spatial point.
%The controlled portion $\Gamma$ of the boundary is here
%$\{0,1\}\times [0,1]^2$, while we prescribe
%impermeability boundary conditions for $u$ and $B$ on the rest of the boundary of $\Omega$.
%The Theorem~\ref{th:main} then holds also in this situation with the same range of $r$
%and with the integration in \eqref{eq:mean} performed over $[0,1]^{3}$.
%The equations for the vorticity in \eqref{eq:curl:one}--\eqref{eq:curl:two}
%here read
%\begin{subequations}
%\begin{align}
%  \partial_t \omega_1^{i} + ( z_2 + \ushear - \Bshear ) \cdot\nabla \omega_1^{i}
%    &= - \epsilon_{ijk} \partial_{j}z_{2}^{l}\partial_{l} z_{1}^{k}
%          \notag \\
%    \partial_t \omega_2^{m} + ( z_2 + \ushear - \Bshear ) \cdot\nabla \omega_2^{m} 
%    &= - \epsilon_{ijk} \partial_{j}z_{1}^{l}\partial_{l} z_{2}^{k}
%    \,.
%    \notag
%\end{align}
%\end{subequations}
%Observe that the equations have the same structure as \eqref{eq:curl:one}--\eqref{eq:curl:two}, thus not requiring
%any modifications in the arguments in Proposition~\ref{lem:local:one}.
%We only note that the periodic De~Rham's theorem in 3D requires
%$\int_{[0,L]^2\times[0,1]} v^{j} = 0$ for $j=1,2$.

%Sermange and Temam

%Cafflisch Clapper Steel local existence in $H^2$ and BKM

%% Bibliography
%\nocite{BBV20}
\nocite{Caflisch97}
\bibliographystyle{abbrv}
\bibliography{MHD}

\end{document}